\newtheorem {theorem}{Theorem}[section]
\newtheorem {lemma}{Lemma}[section]
\newtheorem {example}{Example}[section]
\newtheorem {definition}{Definition}[section]
\newtheorem {remark}{Remark}[section]
\def\ees{{\accent"5E e}\kern-.385em\raise.2ex\hbox{\char'23}\kern-.08em}
\def\EES{{\accent"5E E}\kern-.5em\raise.8ex\hbox{\char'23 }}
\def\ow{o\kern-.42em\raise.82ex\hbox{
\vrule width .12em height .0ex depth .075ex \kern-0.16em \char'56}\kern-.07em}
\def\OW{O\kern-.460em\raise1.36ex\hbox{
\vrule width .13em height .0ex depth .075ex \kern-0.16em \char'56}\kern-.07em}
\title[]{Error bounds of regularized gap functions for polynomial variational inequalities}
\author{DINH BUI VAN$^{\dagger}$}
\address{Department of Mathematics, Le Quy Don Technical University, No 236 Hoang Quoc Viet road, Hanoi, Vietnam}
\email{vandinhb@gmail.com}
\author{TI\EES N-S\OW N PH\d{A}M$^{\ddagger}$}
\address{Department of Mathematics, University of Dalat, 1 Phu Dong Thien Vuong, Dalat, Vietnam}
\email{sonpt@dlu.edu.vn}
\thanks{$^{\ddag, \ddagger}$The authors are partially supported by Vietnam National Foundation for Science and Technology Development (NAFOSTED), grant 101.04-2019.302.}
\keywords{Variational inequality, Regularized gap function, Error bound, {\L}ojasiewicz inequality, Polynomial}
\subjclass{90C26, 65H10, 90C26, 90C31, 26C05}
\date{ \today}
\begin{document}
\maketitle

\begin{abstract}
This paper is devoted to present new error bounds of regularized gap functions for polynomial variational inequalities
with exponents explicitly determined by the dimension of the underlying space and the number/degree of the involved polynomials. The developed techniques are largely based on variational analysis and semialgebraic geometry, which allow us to establish a nonsmooth extension of the seminal \L ojasiewicz's gradient inequality to regularized gap functions with explicitly calculated exponents. 
\end{abstract}

\section{Introduction}

We study the {\em variational inequality} in which a point $x \in \Omega$ is sought such that
\begin{equation} \label{VI}
\langle F(x), y - x \rangle \ge 0 \quad \textrm{ for all } \quad y \in \Omega, \tag{VI}
\end{equation}
where $F \colon \mathbb{R}^n \rightarrow \mathbb{R}^n$ is a map and $\Omega$ is a nonempty closed convex subset of $\mathbb{R}^n.$ When $\Omega$ is the nonnegative orthant in $\mathbb{R}^n,$ the problem \eqref{VI} reduces to the nonlinear complementary problem.

Variational inequalities have been widely studied in various fields such as mathematical programming, game theory and economics, etc.
There is a large literature on all aspects of the theory and application of variational inequalities; for more details, we refer the reader to the survey by Harker and Pang \cite{Harker1990} and the comprehensive monograph by Facchinei and Pang \cite{Facchinei2003} with the references therein.

Many fruitful approaches to both theoretical and numerical treatment of variational inequalities make use of merit functions. 
One such function is the {\em regularized gap function} $\psi \colon \mathbb{R}^n \to \mathbb{R}$ defined by
\begin{eqnarray*}
\psi(x) &:=& \sup_{y \in \Omega} \big \{\langle F(x), x - y \rangle - \frac{\rho}{2} \|x - y\|^2 \big\},
\end{eqnarray*}
where $\rho$ is a positive real number (cf. \cite{Auchmuty1989, Facchinei2003, Fukushima1992, Solodov2003, Taji1993, Wu1993}).  This function has a number of interesting properties. For example, it is finite valued everywhere, nonnegative on $\Omega,$ and becomes zero at any solution of the problem~\eqref{VI}. Furthermore, it is continuously differentiable whenever $F$ is continuously differentiable. These properties are basic for development of iterative decent algorithms for solving variational inequalities.

On the other hand, the theory of error bounds provides a useful aid for understanding the connection between a merit function and the actual distance to its zero set, and hence plays  an  important  role  in  convergence  analysis  and  stopping criteria  for  many iterative  algorithms; for more details, see \cite[Chapter~6]{Facchinei2003} and references therein. Therefore, it would be interesting and useful to investigate error bounds for regularized gap functions associated with variational inequalities.

Assume that the map $F$ is strongly monotone. By virtue of the consideration of differentials, Wu et al. \cite{Wu1993}, Yamashita et al. \cite{Yamashita1997-2}, and Huang et al. \cite{Huang2005} have addressed the error bound issues for $\psi$ when $F$ is smooth, and thereby established convergence results of sequences obtained by an algorithm of Armijo type. These results are extended by Ng and Tan \cite{Ng2007} to cover the case that $F$ is not necessarily smooth; see also \cite{LiGuoyin2009, LiGuoyin2010-1, Solodov2003, Yamashita1997-1} for related works.

We now assume that $F$ is a polynomial map and $\Omega$ is a closed set defined by finitely many polynomial equalities and inequalities. Thanks to the classical \L ojasiewicz inequality (see Theorem~\ref{ClassicalLojasiewiczInequality} in the next section), we can see that for any compact set $K \subset \Bbb{R}^n,$ there exist constants $c > 0$ and $\alpha > 0$ satisfying the following error bound
\begin{eqnarray*}
c\, \mathrm{dist}(x, [x \in \Omega : \psi(x) = 0]) & \leq & [\psi (x)]^{\alpha} \quad \textrm{ for all }  \quad x \in \Omega \cap K,
\end{eqnarray*}
where $\mathrm{dist}(\cdot, \cdot)$ stands for the usual Euclidean distance function. 

In the spirit of \cite{Acunto2005, Kurdyka2014, LiGuoyin2010-2, LiGuoyin2018, LiGuoyin2015, PHAMTS2018}, we show in this paper that the exponent $\alpha$ in the above error bound result is explicitly determined by the dimension of the underlying space and the number/degree of the involved polynomials. The main techniques used in this paper are largely based on variational analysis and semialgebraic geometry, which allow us to establish a nonsmooth extension of the seminal \L ojasiewicz's gradient inequality to the regularized gap function with explicitly calculated exponent. 
It is worth emphasizing that error bound results with {\em explicit} exponents are indeed important for both theory and applications since they can be used, e.g., to establish {\em explicit convergence rates} of iterative algorithms for the solution of variational inequalities.

Note that we do not assume that the map $F$ is (strongly) monotone or the constraint set $\Omega$ is bounded. Furthermore, while all results are stated for the regularized gap function, we believe analogous results can be obtained for the so-called {\em D-gap function}; for the definition and properties of this function, we refer to \cite{Facchinei2003}. However, to lighten the exposition, we do not pursue this idea here.

The rest of the paper is organized as follows: In Section~\ref{SectionPreliminaries}, we review some preliminaries from variational analysis and semi-algebraic geometry that will be used later. In Section~\ref{Section3}, we provide a nonsmooth version of \L ojasiewicz's gradient inequality to the regularized gap function. Finally, in Section~\ref{Section4}, we establish major error bounds 
of regularized gap functions for polynomial variational inequalities.

\section{Preliminaries} \label{SectionPreliminaries}

Throughout this work we deal with the Euclidean space $\mathbb{R}^n$ equipped with the usual scalar product $\langle \cdot, \cdot \rangle$ and the corresponding Euclidean norm $\| \cdot\|.$ The distance from a point $x \in \mathbb{R}^n$ to a nonempty set $A \subset \mathbb{R}^n$ is defined by
$$\mathrm{dist}(x, A) := \inf_{y\in A} \|x - y\|.$$
By our convention, the distance to the empty set is defined as $\mathrm{dist}(x, \emptyset) = 1.$ We write ${\mathrm{co}}A$ for the convex hull of $A.$ We denote by $\mathbb{B}_r(x)$ the closed ball centered at $x$ with radius $r;$ we also use the notations  $\mathbb{B}_r$ for $\mathbb{B}_r(0)$ and $\mathbb{B}$ for the closed unit ball. For each real number $r,$ we put $[r]_+ := \max\{r, 0\}.$

\subsection{Some subdifferentials}
We first recall the notions of subdifferentials, which are crucial for our considerations. For nonsmooth analysis we refer the reader to the comprehensive texts \cite{Clarke1983, Mordukhovich2006-1, Rockafellar1998}.

\begin{definition}{\rm 
Let $f \colon {\Bbb R}^n \rightarrow {\Bbb R}$ be a lower semicontinuous function and $x \in {\Bbb R}^n$.
\begin{enumerate}
  \item[(i)] The {\em Fr\'echet subdifferential} $\hat{\partial} f(x)$ of $f$ at $x $ is given by
$$\hat{\partial} f(x) := \left \{ v \in {\Bbb R}^n \ : \ \liminf_{\| h \| \to 0, \ h \ne 0} \frac{f(x + h) - f(x) - \langle v, h \rangle}{\| h \|} \ge 0 \right \}.$$

  \item[(ii)] The {\em limiting} (known also as  {\it basic, Mordukhovich}) {\it subdifferential} of $f$  at $x,$ denoted by ${\partial} f(x),$ is the set of all cluster points of sequences $\{v^l\}$ such that $v^l\in \hat{\partial} f(x^l)$ and $(x^l, f(x^l)) \to (x, f(x))$ as $l \to \infty.$

\item[(iii)] Assume that $f$ is locally Lipschitz. By Rademacher's theorem, $f$ has at almost all points $x \in \mathbb{R}^n$ a gradient, which we denote $\nabla f(x).$ Then the {\em Clarke (or convexified) subdifferential} ${\partial}^\circ f(x)$ of $f$ at $x$ is defined by
$${\partial}^\circ f(x) := {\mathrm{co}}\{\lim \nabla f(x^k) \ : \ x^k\to x\}.$$
\end{enumerate}
}\end{definition}

\begin{remark}{\rm 
It is  well-known from variational analysis (see e.g., \cite{Clarke1983, Rockafellar1998}) that
\begin{enumerate}
  \item[(i)] $\hat{\partial} f(x)$ (and a fortiori $\partial f(x)$) is nonempty in a dense subset of the domain of $f.$
  \item[(ii)] If $f$ is locally Lipschitz, then ${\partial}^\circ (-f)(x)=-{\partial}^\circ f(x)$ and
$$\hat{\partial}f(x)\subset {\partial}f(x)\subset {\partial}^\circ f(x) = {\mathrm{co}} {\partial} f(x).$$
\end{enumerate}
}\end{remark}

\subsection{Semialgebraic geometry}
In this subsection, we recall some notions and results of semialgebraic geometry, which can be found in \cite{Bochnak1998} or \cite[Chapter 1]{HP2017}.

\begin{definition}{\rm
\begin{enumerate}
  \item[(i)] A subset of $\mathbb{R}^n$ is called {\em semialgebraic} if it is a finite union of sets of the form
$$\{x \in \mathbb{R}^n \ : \ f_i(x) = 0, i = 1, \ldots, k; f_i(x) > 0, i = k + 1, \ldots, p\}$$
where all $f_{i}$ are polynomials.
 \item[(ii)]
Let $A \subset \Bbb{R}^n$ and $B \subset \Bbb{R}^p$ be semialgebraic sets. A map $f \colon A \to B$ is said to be {\em semialgebraic} if its graph
$$\{(x, y) \in A \times B \ : \ y = f(x)\}$$
is a semialgebraic subset of $\Bbb{R}^n\times\Bbb{R}^p.$
\end{enumerate}
}\end{definition}

The class of semialgebraic sets is closed under taking finite intersections, finite unions, and complements; furthermore, a Cartesian product of semialgebraic sets is semialgebraic. A major fact concerning the class of semialgebraic sets is given by the following seminal result of semialgebraic geometry.

\begin{theorem}[Tarski--Seidenberg theorem] \label{TarskiSeidenbergTheorem}
Images of semialgebraic sets under semialgebraic maps are semialgebraic.
\end{theorem}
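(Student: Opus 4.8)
The plan is to reduce the statement to a theorem about projections and then prove that one by induction. First I would note that the image of a semialgebraic map $f \colon A \to B$, with $A \subseteq \mathbb{R}^n$ and $B \subseteq \mathbb{R}^p$, coincides with the image of the graph of $f$ (a semialgebraic subset of $\mathbb{R}^n \times \mathbb{R}^p$) under the coordinate projection $\pi \colon \mathbb{R}^n \times \mathbb{R}^p \to \mathbb{R}^p$, and that $\pi$ factors as a composition of $n$ projections, each of which forgets a single coordinate. Since the semialgebraic sets are stable under finite unions, finite intersections and complements, and projection commutes with unions, the whole assertion follows once one shows: if $S \subseteq \mathbb{R}^{n+1}$ has the form
\[
S = \big\{ (x,t) \in \mathbb{R}^n \times \mathbb{R} \ : \ P_i(x,t) = 0 \ (1 \le i \le k), \ P_i(x,t) > 0 \ (k < i \le s) \big\}
\]
for polynomials $P_i$, then the projection of $S$ onto $\mathbb{R}^n$ is semialgebraic.

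To prove this, I would regard $\mathcal{P} = \{P_1, \dots, P_s\}$ as a finite family of polynomials in the single variable $t$ whose coefficients are polynomials in $x = (x_1, \dots, x_n)$, and aim for the following parametric statement: there is a finite family $\mathcal{Q} \subseteq \mathbb{R}[x]$ such that for every $x \in \mathbb{R}^n$ the sign vector $\big(\operatorname{sign} Q(x)\big)_{Q \in \mathcal{Q}}$ determines the finite set of sign vectors $\big(\operatorname{sign} P_1(x,t), \dots, \operatorname{sign} P_s(x,t)\big)$ realized as $t$ runs over $\mathbb{R}$. Granting this, the projection of $S$ is exactly the set of $x$ for which the sign vector prescribed by the definition of $S$ (zeros in the first $k$ slots, positive signs in the rest) occurs in that realized set; this is a finite union of sets of the form $\{ x : (\operatorname{sign} Q(x))_{Q \in \mathcal{Q}} = \sigma \}$, hence semialgebraic.

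The parametric statement I would establish by induction on $\sum_i \deg_t P_i$. The induction step is the substantive part: enlarge $\mathcal{P}$ by adjoining the partial derivatives $\partial P_i / \partial t$ and all polynomials (together with their leading coefficients in $t$) obtained by iterated pseudo-division among the members of $\mathcal{P}$ and their $t$-derivatives. These leading and subresultant coefficients lie in $\mathbb{R}[x]$, and they, together with finitely many further auxiliary polynomials, constitute $\mathcal{Q}$. On each stratum of $\mathbb{R}^n$ on which the signs of these coefficients are fixed, the $t$-degrees of the $P_i$ are constant, the number of distinct real roots of each $P_i(x,\cdot)$ is determined by subresultant (Sturm-type) information—again encoded by signs of elements of $\mathcal{Q}$—and, by Thom's lemma (the fact that a univariate polynomial together with all its derivatives has the property that each prescribed sign condition on this family cuts out an interval), the relative positions of all the roots of the family are pinned down by the sign data. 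The loci where a leading coefficient vanishes, so that a $t$-degree drops, form a proper subvariety and are handled by descending induction on the degree; gluing the finitely many strata then produces the required finite $\mathcal{Q}$.

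The hard part will be precisely this bookkeeping. One must verify that the differentiation-and-division process generates only finitely many auxiliary polynomials and, more importantly, that the sign pattern of this finite family at a parameter point $x$ genuinely controls—uniformly in $x$—both the number of real roots of each $P_i(x,\cdot)$ and the way these roots interleave. Thom's lemma and the good specialization behaviour of subresultants are the tools that make this possible; the delicate point is to keep careful track of the degenerate loci where the degree in $t$ drops, so that the induction remains well founded.
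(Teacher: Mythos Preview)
The paper does not actually prove this theorem: it is stated as a classical result of semialgebraic geometry and attributed to the references \cite{Bochnak1998} and \cite{HP2017}, with no proof given in the paper itself. So there is nothing in the paper to compare your argument against.

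Your outline is a correct sketch of one of the standard proofs (the Cohen--H\"ormander/\allowbreak Bochnak--Coste--Roy elimination-of-one-variable argument). The reduction to single-coordinate projections is right, and the parametric statement you aim for---that a finite family $\mathcal{Q}\subset\mathbb{R}[x]$ of ``elimination polynomials'' can be produced whose sign vector at $x$ determines the set of sign conditions on the $P_i(x,\cdot)$ realized over $\mathbb{R}$---is exactly the heart of the matter. Two small cautions if you want to turn this into a full proof: (i) the induction is cleanest if phrased on the maximum $t$-degree (or on a well-founded measure combining the number of polynomials of top degree with that degree), rather than on $\sum_i\deg_t P_i$, since adjoining derivatives and pseudo-remainders can increase the latter; (ii) Thom's lemma as usually stated tells you that a fixed sign condition on a polynomial and all its derivatives cuts out an interval, which is what you need to label and order the roots consistently across a sign-invariant cell, but you should be explicit that this gives the \emph{order type} of the root arrangement, not just the number of roots. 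With those points handled, the argument goes through.
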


\begin{remark}\label{TarskiSeidenbergRemark} {\rm
As an immediate consequence of Tarski--Seidenberg Theorem, we get semialgebraicity of any set $\{ x \in A : \exists y \in B,  (x, y) \in C \},$  provided that $A ,  B,$  and $C$  are semialgebraic sets in the corresponding spaces.  It follows that also $\{ x \in A : \forall y \in B,  (x, y) \in C \}$ is
a semialgebraic set as its complement is the union of the complement of $A$  and the set
$\{ x \in A : \exists y \in B,  (x, y) \not\in C \}.$ Thus, if we have a finite collection of semialgebraic sets,
then any set obtained from them with the help of a finite chain of quantifiers is also semialgebraic. 
}\end{remark}

\begin{theorem}[the classical \L ojasiewicz inequality] \label{ClassicalLojasiewiczInequality}
Let $f \colon {\Bbb R}^n \rightarrow {\Bbb R}$ be a continuous semialgebraic function. For each compact subset $K$ of $\Bbb{R}^n$ with $K \cap f^{-1}(0) \ne \emptyset,$ there exist constants $c>0$ and $\alpha > 0$ satisfying the following inequality
\begin{eqnarray*}
c\, \mathrm{dist}(x, f^{-1}(0)) & \le & |f(x)|^\alpha  \quad \textrm{ for all } \quad x \in K.
\end{eqnarray*}
\end{theorem}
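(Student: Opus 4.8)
The plan is to replace the arbitrary compact set $K$ by a closed ball, which is compact \emph{and} semialgebraic, and then to reduce the whole question to the behaviour near $0$ of a single one-variable semialgebraic function. First, choose $R > 0$ with $K \subseteq \mathbb{B}_R$; since $\mathbb{B}_R \cap f^{-1}(0) \supseteq K \cap f^{-1}(0) \ne \emptyset$, it suffices to prove the inequality on $\mathbb{B}_R$, the case of the smaller $K$ then following a fortiori. Set $Z := f^{-1}(0)$; being the projection to $\mathbb{R}^n$ of $\{(x,y) : y = f(x)\} \cap (\mathbb{R}^n \times \{0\})$ it is semialgebraic, and it is closed because $f$ is continuous, so $\mathrm{dist}(\cdot, Z)$ is a continuous semialgebraic function by the Tarski--Seidenberg theorem (Theorem~\ref{TarskiSeidenbergTheorem}). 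Put $M := \max_{x \in \mathbb{B}_R} |f(x)|$ and $D := \max_{x \in \mathbb{B}_R} \mathrm{dist}(x, Z)$, both finite; if $D = 0$ the claim is trivial, so assume $D > 0$. Now introduce
\begin{eqnarray*}
\nu(s) & := & \sup \{ \mathrm{dist}(x, Z) \ : \ x \in \mathbb{B}_R, \ |f(x)| \le s \}, \qquad s \in [0, M],
\end{eqnarray*}
the supremum being over a nonempty compact set (it contains $\mathbb{B}_R \cap Z$) and hence attained. By Theorem~\ref{TarskiSeidenbergTheorem} together with Remark~\ref{TarskiSeidenbergRemark}, $\nu$ is a semialgebraic function of $s$; it is nondecreasing, and $\nu(0) = 0$ since $|f(x)| \le 0$ forces $x \in Z$.

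The core of the proof is the estimate $\nu(s) \le C s^{\alpha}$ for all sufficiently small $s \ge 0$, with constants $C, \alpha > 0$. I would first observe that $\nu(s) \to 0$ as $s \to 0^{+}$: otherwise there would exist $\delta > 0$ and $s_k \downarrow 0$ together with points $x_k \in \mathbb{B}_R$ satisfying $|f(x_k)| \le s_k$ and $\mathrm{dist}(x_k, Z) \ge \delta$, and a convergent subsequence $x_k \to x^{*} \in \mathbb{B}_R$ would then satisfy $f(x^{*}) = 0$ by continuity, hence $x^{*} \in Z$, contradicting $\mathrm{dist}(x^{*}, Z) \ge \delta$. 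Since $\nu$ is a bounded one-variable semialgebraic function, the standard structure theory of such functions (the Puiseux expansion at the origin, a companion of the Tarski--Seidenberg theorem; see the references cited in this section) yields $\varepsilon \in (0, M]$ such that on $(0, \varepsilon]$ the function $\nu$ is either identically $0$ or coincides with a convergent Puiseux series whose leading term equals $a s^{\alpha}$ for some $a > 0$ and $\alpha \in \mathbb{Q}_{>0}$ — the exponent being strictly positive precisely because $\nu(s) \to 0$. After shrinking $\varepsilon$ we obtain, in both cases, constants $C > 0$ and $\alpha > 0$ with $\nu(s) \le C s^{\alpha}$ for all $s \in [0, \varepsilon]$. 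This appeal to one-variable semialgebraic structure is the step that carries the real content, and I expect it to be the main obstacle; everything around it is bookkeeping.

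It remains to combine the two regimes on $\mathbb{B}_R$. If $x \in \mathbb{B}_R$ and $|f(x)| \le \varepsilon$, then $\mathrm{dist}(x, Z) \le \nu(|f(x)|) \le C |f(x)|^{\alpha}$. If instead $x \in \mathbb{B}_R$ and $|f(x)| > \varepsilon$, then $\mathrm{dist}(x, Z) \le D = (D \varepsilon^{-\alpha}) \varepsilon^{\alpha} \le (D \varepsilon^{-\alpha}) |f(x)|^{\alpha}$. Therefore, setting $c := \big( \max\{ C, D \varepsilon^{-\alpha} \} \big)^{-1} > 0$, we conclude that $c \, \mathrm{dist}(x, f^{-1}(0)) \le |f(x)|^{\alpha}$ for every $x \in \mathbb{B}_R$, and in particular for every $x \in K$, which is the assertion. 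Note that this argument produces no explicit value of $\alpha$; extracting explicit exponents, as pursued in the body of the paper, requires quantitative refinements of the one-variable structure theory rather than its merely qualitative form used here.
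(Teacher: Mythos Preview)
The paper does not actually prove this statement: Theorem~\ref{ClassicalLojasiewiczInequality} is listed among the preliminaries as a known result from semialgebraic geometry, with implicit reference to \cite{Bochnak1998, HP2017}, and no argument is supplied. So there is no ``paper's own proof'' to compare against.

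That said, your proof is correct and is essentially the classical one. Enlarging $K$ to a semialgebraic ball, introducing the one-variable modulus $\nu(s)=\sup\{\mathrm{dist}(x,Z):x\in\mathbb{B}_R,\ |f(x)|\le s\}$, checking $\nu$ is semialgebraic and tends to $0$ at $0^+$ by compactness, and then invoking the Puiseux structure of one-variable semialgebraic germs to obtain $\nu(s)\le Cs^{\alpha}$ --- this is exactly the standard reduction, and the final splitting into $|f(x)|\le\varepsilon$ versus $|f(x)|>\varepsilon$ is routine. The only place one might ask for a bit more care is the Puiseux step: $\nu$ is not a priori continuous, so strictly speaking you are first using the monotonicity theorem for semialgebraic functions to get an interval $(0,\varepsilon)$ on which $\nu$ is continuous (indeed $C^0$ and monotone), and then applying the Puiseux expansion there; your wording already hints at this, but it is worth making explicit. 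Your closing remark that this argument yields no explicit $\alpha$ is accurate and correctly situates the theorem relative to the quantitative results developed later in the paper.
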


We also need another fundamental result taken from \cite[Theorem~4.2]{Acunto2005}, which provides an exponent estimate in the \L ojasiewicz gradient inequality for polynomials.

\begin{theorem}[\L ojasiewicz's gradient inequality] \label{LojasiewiczGradientInequality}
Let $f \colon \mathbb{R}^n \rightarrow \mathbb{R}$ be a polynomial of degree $d \ge 1$ and let $\bar{x} \in \mathbb{R}^n.$ Then there exist positive constants $c$ and $\epsilon$ such that
$$\|\nabla f(x)\|\ge c|f(x) - f(\bar x)|^{1 - \frac{1}{\mathscr{R}(n, d)}} \quad \textrm{ for all } \quad x \in \mathbb{B}_\epsilon(\bar x).$$
Here and in the following, we put
\begin{eqnarray}\label{RFunction}
\mathscr{R}(n, d) :=
\begin{cases}
d(3d - 3)^{n - 1} &\ {\rm if} \ d \geq 2, \\
1 &\  {\rm if} \ d = 1.
\end{cases}
\end{eqnarray}
\end{theorem}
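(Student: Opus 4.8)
The plan is to reduce the gradient inequality to a one-variable order-of-vanishing estimate along a single well-chosen analytic arc, and then to control that order by forcing the arc onto a polar curve whose degree is bounded in terms of $n$ and $d$. First I would make a few harmless reductions. Since $f - f(\bar x)$ has the same degree as $f$ (as $\deg f \ge 1$) and the same gradient, I may assume $f(\bar x) = 0$, and after a translation $\bar x = 0$. If $\nabla f(0) \ne 0$, then $\|\nabla f\|$ is bounded away from $0$ and $|f|$ is bounded on a small ball $\mathbb{B}_\epsilon$, so the asserted inequality holds trivially; hence I may assume $\nabla f(0) = 0$. For $n = 1$ one has $\mathscr{R}(1,d) = d$, and writing $f(x) = a\,x^{k} + \cdots$ near $0$ with $1 \le k \le d$, $a \ne 0$, the inequality is immediate from $|f'(x)| \asymp |x|^{k-1}$, $|f(x)| \asymp |x|^{k}$; this reappears below as the base case of the degree estimate.

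Next I pass to the \L ojasiewicz gradient exponent $\mathcal{L}$, defined as the infimum of those $\theta > 0$ for which $\|\nabla f(x)\| \ge c\,|f(x)|^{\theta}$ holds on some ball about $0$; the goal is $\mathcal{L} \le 1 - 1/\mathscr{R}(n,d)$. Applying the curve selection lemma (see, e.g., \cite{Bochnak1998}) to the semialgebraic sets $\{\,0 < \|x\| < \epsilon : \|\nabla f(x)\|^{b} < |f(x)|^{a}\,\}$ with $a/b \uparrow \mathcal{L}$ yields, in the standard way, a real analytic arc $\gamma \colon [0,\epsilon) \to \mathbb{R}^n$ with $\gamma(0) = 0$, $\gamma(t) \ne 0$ and $f(\gamma(t)) \ne 0$ for $t > 0$, along which $\mathcal{L}$ is realized; reparametrizing so that $\gamma(t) = tu + \cdots$ with $u \ne 0$, we have $f(\gamma(t)) = c_1 t^{p} + \cdots$ and $\|\nabla f(\gamma(t))\| = c_2 t^{q} + \cdots$ with $c_1, c_2 \ne 0$, $q \ge 1$, and $\mathcal{L} = q/p$. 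The cheap half of the argument is the inequality $q \le p - 1$: since $\tfrac{d}{dt} f(\gamma(t)) = \langle \nabla f(\gamma(t)), \gamma'(t)\rangle$ has order exactly $p - 1$ in $t$, whereas $|\langle \nabla f(\gamma(t)), \gamma'(t)\rangle| \le \|\nabla f(\gamma(t))\|\,\|\gamma'(t)\|$ has order at least $q$ (because $\|\gamma'(t)\|$ has order $0$), we get $p - 1 \ge q$. Consequently $\mathcal{L} = q/p \le 1 - 1/p$, and the entire problem is reduced to bounding $p = \mathrm{ord}_t f(\gamma(t))$ by $\mathscr{R}(n,d)$.

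The order of vanishing along an arbitrary realizing arc is not controlled by $n$ and $d$ alone, so I would replace $\gamma$ by an arc of bounded complexity via a polar-curve argument; this is the step I expect to be the main obstacle. Set $g := \|\nabla f\|^{2}$, a polynomial of degree at most $2d - 2$, and let
$$\Gamma := \{\, x \in \mathbb{R}^n : \nabla f(x) \wedge \nabla g(x) = 0 \,\}$$
be the locus on which all $2 \times 2$ minors $\partial_i f\,\partial_j g - \partial_j f\,\partial_i g$ vanish; these minors have degree at most $(d - 1) + (2d - 2) = 3d - 3$. The claim to establish is that $\mathcal{L}$ is still realized along an arc $\gamma^{*}$ contained in $\Gamma$: for small regular values $c$ of $f$ on the side on which $f \circ \gamma$ takes values, the minimum of $g$ over $\{ f = c \}$ near $0$ is an interior critical point of $g|_{\{f = c\}}$, hence satisfies $\nabla g \parallel \nabla f$ and lies in $\Gamma$; by the curve selection lemma these minimizers can be selected along a semialgebraic arc $\gamma^{*} \to 0$; and comparing the order of vanishing of $g$ along $\gamma^{*}$ with that along $\gamma$, both re-read as functions of $c = f(\gamma(\cdot))$, shows that $\gamma^{*}$ also realizes $\mathcal{L}$. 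Making this rigorous requires some care: one must verify that the relevant extrema are interior and not on an artificial boundary sphere, and one must cope with the possibility that $\Gamma$ is not one-dimensional near $0$ (which can happen, e.g. when $g$ is a function of $f$); the latter is handled by a general-position argument or, equivalently, by organizing the whole proof as an induction on $n$ in which the present estimate is the inductive step.

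Granting that $\gamma^{*}$ is a branch at $0$ of the algebraic curve $\Gamma$, I would bound $\deg \Gamma$ by choosing $n - 1$ generic linear combinations of the minors above that cut out $\Gamma$ near $0$ and invoking B\'ezout's theorem, getting $\deg \Gamma \le (3d - 3)^{n-1}$. Then, using the elementary fact that, with $\gamma^{*}$ parametrized so that $\|\gamma^{*}(t)\|$ is comparable to $t$, the order of vanishing along $\gamma^{*}$ of any polynomial $h$ of degree $e$ not vanishing on this branch is at most the intersection multiplicity of $\Gamma$ with $\{ h = 0 \}$ at $0$ and hence at most $e \cdot \deg \Gamma$ by B\'ezout, applied with $h = f$ and $e = d$, we obtain
$$p^{*} := \mathrm{ord}_t f(\gamma^{*}(t)) \ \le\ d\,(3d - 3)^{n-1} = \mathscr{R}(n,d).$$
Combining with $q^{*} \le p^{*} - 1$ (the cheap inequality applied to $\gamma^{*}$) gives $\mathcal{L} = q^{*}/p^{*} \le 1 - 1/p^{*} \le 1 - 1/\mathscr{R}(n,d)$, which is the assertion; unwinding the definition of $\mathcal{L}$ produces the constants $c$ and $\epsilon$. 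For $n = 1$, $\Gamma = \mathbb{R}$, $\deg \Gamma = 1$, and the scheme degenerates to the base case. This is, in outline, the argument of D'Acunto and Kurdyka \cite{Acunto2005}.
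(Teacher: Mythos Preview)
The paper does not prove this theorem at all: it is quoted verbatim as a known result from D'Acunto and Kurdyka \cite[Theorem~4.2]{Acunto2005} and used as a black box in the proof of Theorem~\ref{Theorem31}. There is therefore no ``paper's own proof'' to compare your proposal against.

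That said, what you have written is a faithful high-level outline of the D'Acunto--Kurdyka argument, as you yourself note at the end. The reductions, the chain-rule inequality $q \le p-1$, the passage to a polar curve $\Gamma = \{\nabla f \wedge \nabla g = 0\}$ with $g = \|\nabla f\|^2$, and the B\'ezout bound $\deg \Gamma \le (3d-3)^{n-1}$ leading to $p^* \le d(3d-3)^{n-1}$ are exactly the ingredients of \cite{Acunto2005}. The places you flag as needing care (that the minimizers of $g$ on level sets are interior, and that $\Gamma$ may fail to be one-dimensional) are genuine and are handled in \cite{Acunto2005} by working inside a small ball and by a genericity/deformation argument; your sketch is honest about these gaps but does not close them, so as written it is an outline rather than a proof. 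For the purposes of the present paper, simply citing \cite[Theorem~4.2]{Acunto2005} is what is required.
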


\section{The \L ojasiewicz gradient inequality for the regularized gap function} \label{Section3}

In this section, we establish a nonsmooth version of the \L ojasiewicz's gradient inequality with explicitly calculated exponents to regularized gap functions of polynomial variational inequalities.

From now on, let $F \colon \mathbb{R}^n \rightarrow \mathbb{R}^n$ be a polynomial map of degree at most $d \ge 1.$ Let $g_i, h_j \colon \mathbb{R}^n \to \mathbb{R}$ 
for $i=1, \ldots, r$ and $j=1, \ldots, s$ be polynomial functions of degree at most $d,$ and assume that the set
\begin{eqnarray*}
\Omega &:=& \{x \in \mathbb{R}^n \ : \ g_i(x) \leq 0, \ i=1, \ldots, r, \ h_j(x) = 0, \ j=1, \ldots, s\}
\end{eqnarray*}
is (not necessarily convex or bounded) nonempty. Recall the variational inequality formulated in the introduction section: find a point $x \in \Omega$ such that
\begin{equation} \label{VI}
\langle F(x), y - x \rangle \ge 0 \quad \textrm{ for all } \quad y \in \Omega. \tag{VI}
\end{equation}

Fix a positive real number $\rho$ and define the function $\phi \colon \mathbb{R}^n \times \mathbb{R}^n \rightarrow \mathbb{R}, (x, y) \mapsto \phi(x, y),$  by
\begin{eqnarray*}
\phi(x, y) &:=& \langle F(x), x - y \rangle - \frac{\rho}{2} \|x - y\|^2.
\end{eqnarray*}
By definition, $\phi$ is a polynomial in $2n$ variables of degree at most $d + 1.$ Furthermore, for each $x \in \mathbb{R}^n$ we have 
$\lim_{\|y\| \to \infty} \phi(x, y) = - \infty,$ and so the regularized gap function associated to the problem~\eqref{VI}:
$$\psi \colon \mathbb{R}^n \rightarrow \mathbb{R}, \quad x \mapsto \sup_{y \in \Omega} \phi(x, y),$$ 
is well-defined. We will write
\begin{eqnarray*}
\Omega(x) := \{y \in \Omega \ : \  \psi(x) = \phi(x, y)\} \quad \textrm{ for } \quad x \in \mathbb{R}^n.
\end{eqnarray*}

\begin{lemma} \label{Lemma31}
The following statements hold
\begin{enumerate}
\item [(i)] For each $x \in \mathbb{R}^n,$ $\Omega(x)$ is a nonempty compact set.
\item [(ii)] Let $\bar{x} \in \mathbb{R}^n.$ For any $\epsilon > 0,$ there exists a constant $R > 0$ such that
\begin{equation*}
\Omega(x) \subset \{y \in \mathbb{R}^n  :  \| y \| < R \} \quad \textrm{ for all } \quad x \in \mathbb{B}_\epsilon(\bar{x}).
\end{equation*}
\end{enumerate}
 \end{lemma}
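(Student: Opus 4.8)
The plan is to extract a single coercivity estimate that will simultaneously yield the boundedness assertions in both (i) and (ii), and then to obtain nonemptiness and closedness from the Weierstrass extremum theorem.

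First I would fix once and for all a point $y^0 \in \Omega$, which is possible since $\Omega \ne \emptyset$. For arbitrary $x \in \mathbb{R}^n$ and $y \in \Omega$, the Cauchy--Schwarz inequality gives
\[
\phi(x, y) \ = \ \langle F(x), x - y \rangle - \frac{\rho}{2}\|x - y\|^2 \ \le \ \|F(x)\|\,\|x - y\| - \frac{\rho}{2}\|x - y\|^2 .
\]
Writing $t := \|x - y\| \ge 0$, the right-hand side is the quadratic function $t \mapsto \|F(x)\|\,t - \frac{\rho}{2}t^2$, which tends to $-\infty$ as $t \to +\infty$. On the other hand, by definition of $\psi$ we have $\psi(x) \ge \phi(x, y^0)$, so every $y \in \Omega(x)$ satisfies
\[
\|F(x)\|\,\|x - y\| - \frac{\rho}{2}\|x - y\|^2 \ \ge \ \phi(x, y) \ = \ \psi(x) \ \ge \ \phi(x, y^0);
\]
hence $\|x - y\|$ is forced to lie in a bounded interval whose endpoints depend only on $\|F(x)\|$, $\rho$, and $\phi(x, y^0)$.

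To prove (ii), I would restrict $x$ to the compact ball $\mathbb{B}_\epsilon(\bar x)$. Since $F$ is polynomial, hence continuous, there is a constant $C$ with $\|F(x)\| \le C$ on $\mathbb{B}_\epsilon(\bar x)$; similarly, the continuous function $x \mapsto \phi(x, y^0)$ is bounded below on $\mathbb{B}_\epsilon(\bar x)$, say $\phi(x, y^0) \ge -M$ for some $M \ge 0$. Plugging these bounds into the estimate above and using $t \ge 0$ yields $C t - \frac{\rho}{2}t^2 \ge -M$ for $t = \|x - y\|$, so that $\|x - y\| \le T$ with $T := \big(C + \sqrt{C^2 + 2\rho M}\big)/\rho$ depending only on $C$, $M$, $\rho$. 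Therefore $\|y\| \le \|x\| + T \le \|\bar x\| + \epsilon + T$ for every $x \in \mathbb{B}_\epsilon(\bar x)$ and every $y \in \Omega(x)$, and taking $R := \|\bar x\| + \epsilon + T + 1$ gives the claimed strict inclusion. In particular, applying this with $\bar x := x$ shows that $\Omega(x)$ is a bounded subset of $\mathbb{R}^n$ for each fixed $x$.

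For the remaining assertions of (i), I would fix $x$ and use the coercivity estimate to choose a radius $T$ so large that $\phi(x, y) < \phi(x, y^0)$ whenever $y \in \Omega$ and $\|x - y\| > T$; then $\psi(x) = \sup_{y \in \Omega} \phi(x, y) = \sup_{y \in K_x} \phi(x, y)$, where $K_x := \{ y \in \Omega : \|x - y\| \le T \}$ is compact (closed because $\Omega$ is closed, bounded by construction) and contains $y^0$. Since $\phi(x, \cdot)$ is continuous, the Weierstrass theorem provides $\bar y \in K_x$ with $\phi(x, \bar y) = \psi(x)$, so $\bar y \in \Omega(x)$ and $\Omega(x) \ne \emptyset$. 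Finally, $\Omega(x)$ is the intersection of the closed set $\Omega$ with the closed set $\{ y : \phi(x, y) = \psi(x) \}$, and it is contained in $K_x$, hence compact. The only delicate point is the uniformity of the estimates in $x$ required for (ii); this is precisely what the continuity of $F$ and of $\phi(\cdot, y^0)$ on the compact ball $\mathbb{B}_\epsilon(\bar x)$ supplies, so I do not expect a genuine obstacle here.
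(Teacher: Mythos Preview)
Your proof is correct and follows the same underlying idea as the paper: the coercivity of $-\phi(x,\cdot)$ (i.e., $\phi(x,y)\to-\infty$ as $\|y\|\to\infty$) forces maximizers to lie in a bounded set, uniformly in $x$ on compacta, and Weierstrass then gives existence. The paper's own proof is a one-line sketch that asserts exactly this and leaves the details to the reader; your argument supplies those details explicitly via the Cauchy--Schwarz estimate and the quadratic bound on $t\mapsto \|F(x)\|\,t-\frac{\rho}{2}t^2$, so there is no substantive difference in approach.
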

\begin{proof}
The lemma follows immediately from the fact that for each $x \in \mathbb{R}^n,$ 
$$\lim_{\|y\| \to \infty} \phi(x, y) = - \infty.$$
The details are left to the reader.
\end{proof}

By Lemma~\ref{Lemma31}, we can write $\psi(x) = \max_{y \in \Omega} \phi(x, y).$ Furthermore, thanks to Theorem~\ref{TarskiSeidenbergTheorem} (see also Remark~\ref{TarskiSeidenbergRemark}), it is not hard to check that the function $\psi$ is semialgebraic.

\begin{lemma} \label{Lemma32}
The function $\psi$ is locally Lipschitz and satisfies
\begin{eqnarray*}
\partial ^0 \psi (x)  &  = & \mathrm{co} \left \{ \nabla_x \phi(x, y) \ : \ y \in \Omega(x) \right\} \quad \textrm{for all } \quad x \in \mathbb{R}^n,
\end{eqnarray*}
where $\nabla_x \phi$ is the derivative of $\phi$ with respect to $x.$ In particular, $\partial^\circ \psi (x)$ is a nonempty, compact and convex set.
\end{lemma}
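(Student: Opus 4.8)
The plan is to localize and then invoke the classical Clarke max-rule for a maximum taken over a compact parameter set (e.g.\ \cite[Theorem~2.8.2]{Clarke1983}). First I would fix $\bar{x}\in\mathbb{R}^n$ and $\epsilon>0$, let $R>0$ be the constant furnished by Lemma~\ref{Lemma31}(ii), and set $Y:=\Omega\cap\mathbb{B}_R$; this set is compact (being closed and bounded) and nonempty (it contains $\Omega(\bar x)\ne\emptyset$ by Lemma~\ref{Lemma31}(i)). Since for every $x\in\mathbb{B}_\epsilon(\bar x)$ the supremum defining $\psi(x)$ is attained at some point of $\Omega(x)$, and $\Omega(x)\subseteq\Omega\cap\mathbb{B}_R=Y$ by Lemma~\ref{Lemma31}(ii), one checks at once that
$$\psi(x)=\max_{y\in Y}\phi(x,y)\quad\text{and}\quad\Omega(x)=\big\{y\in Y:\psi(x)=\phi(x,y)\big\}\qquad\text{for all }x\in\mathbb{B}_\epsilon(\bar x).$$
Thus, near $\bar x$, the function $\psi$ is the pointwise maximum of the polynomial (hence $C^1$) functions $\phi(\cdot,y)$ over the \emph{fixed} compact index set $Y$.

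Next I would establish local Lipschitzness. Since $\nabla_x\phi$ is continuous it is bounded, say by $L>0$, on the compact set $\mathbb{B}_\epsilon(\bar x)\times Y$. For $x,x'\in\mathbb{B}_\epsilon(\bar x)$ the segment joining them lies in $\mathbb{B}_\epsilon(\bar x)$, so the mean value theorem gives $|\phi(x,y)-\phi(x',y)|\le L\|x-x'\|$ for all $y\in Y$; choosing $y_x\in\Omega(x)\subseteq Y$ yields $\psi(x)=\phi(x,y_x)\le\phi(x',y_x)+L\|x-x'\|\le\psi(x')+L\|x-x'\|$, and exchanging $x$ and $x'$ gives $|\psi(x)-\psi(x')|\le L\|x-x'\|$. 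Hence $\psi$ is Lipschitz on $\mathbb{B}_\epsilon(\bar x)$; as $\bar x$ is arbitrary, $\psi$ is locally Lipschitz on $\mathbb{R}^n$.

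For the subdifferential formula I would apply Clarke's max-rule \cite[Theorem~2.8.2]{Clarke1983} to the representation $\psi=\max_{y\in Y}\phi(\cdot,y)$, valid on the neighborhood $\mathbb{B}_\epsilon(\bar x)$ of $\bar x$: here $Y$ is compact, $\phi$ and $\nabla_x\phi$ are jointly continuous, and each $\phi(\cdot,y)$ is of class $C^1$ — hence Clarke regular, with Clarke subdifferential $\{\nabla_x\phi(x,y)\}$ at $x$ — so the theorem gives
$$\partial^\circ\psi(\bar x)=\mathrm{co}\big\{\nabla_x\phi(\bar x,y):y\in\Omega(\bar x)\big\},$$
and since $\bar x$ is arbitrary this is the asserted identity at every $x\in\mathbb{R}^n$. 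The last assertion then follows because $\Omega(x)$ is nonempty and compact by Lemma~\ref{Lemma31}(i) and $\nabla_x\phi(x,\cdot)$ is continuous, so $\{\nabla_x\phi(x,y):y\in\Omega(x)\}$ is nonempty and compact, whence its convex hull in $\mathbb{R}^n$ is nonempty, compact (Carath\'eodory) and convex.

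The substantive point — the one I expect to need care — is obtaining \emph{equality}, not merely an inclusion, in the displayed formula, i.e.\ the reverse inclusion $\mathrm{co}\{\nabla_x\phi(x,y):y\in\Omega(x)\}\subseteq\partial^\circ\psi(x)$. This is exactly where the $C^1$ regularity of the integrands $\phi(\cdot,y)$ enters: it makes $\psi$ Clarke regular, so that the generalized directional derivative $\psi^\circ(x;v)$ coincides with the ordinary one-sided directional derivative $\psi'(x;v)$, which by Danskin's theorem equals $\max_{y\in\Omega(x)}\langle\nabla_x\phi(x,y),v\rangle$ — precisely the support function of $\mathrm{co}\{\nabla_x\phi(x,y):y\in\Omega(x)\}$; matching the support functions of the two nonempty compact convex sets forces equality. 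A self-contained alternative would be to prove this Danskin-type directional-derivative formula directly, from the uniform Lipschitz bound of the previous step together with the upper semicontinuity of the set-valued map $x\mapsto\Omega(x)$.
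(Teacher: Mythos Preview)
Your proposal is correct and follows essentially the same route as the paper: localize via Lemma~\ref{Lemma31} so that $\psi$ becomes a maximum over a fixed compact index set, deduce local Lipschitzness, and then invoke Clarke's max-rule (the paper cites \cite[Theorem~2.1]{Clarke1975}, which is the same result as \cite[Theorem~2.8.2]{Clarke1983}). Your write-up is simply more explicit than the paper's, in particular in spelling out the Lipschitz estimate and in isolating the regularity hypothesis that upgrades the max-rule inclusion to an equality.
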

\begin{proof}
Let $\bar{x} \in \mathbb{R}^n$ and $\epsilon > 0.$ By Lemma~\ref{Lemma31}, there exists $R > 0$ such that $\|y\| < R$ for all $y \in \Omega(x)$ and all $x \in \mathbb{B}_\epsilon(\bar{x}).$ Hence we can write 
$$\psi(x) = \max_{y \in \Omega, \|y\| \le R} \phi(x, y) \quad \textrm{ for all } \quad x \in \mathbb{B}_\epsilon(\bar{x}).$$
This implies easily that $\psi$ is locally Lipschitz. Finally, the formula for $\partial ^0 \psi$ follows immediately from \cite[Theorem~2.1]{Clarke1975}.
\end{proof}

We need the following qualification condition imposed on the constraint set $\Omega.$

\begin{definition}{\rm 
We say that the {\em Mangasarian--Fromovitz constraint qualification} (MFCQ) holds on $\Omega$ if, for every $x \in \Omega,$ the gradient vectors $\nabla h_j(x), j = 1, \ldots, s,$ are linearly independent and there exists a vector $v \in \Bbb{R}^n$ such that $\langle \nabla g_i(x), v \rangle < 0, i \in \{i : g_i(x) = 0\}$ and $\langle \nabla h_j(x), v \rangle  = 0, j = 1, \ldots, s.$
}\end{definition}

For each $x \in \Omega,$ we let 
\begin{eqnarray*}
N(\Omega, x) &:=& \left \{\sum_{i=1}^r \mu_i \nabla g_i(x) +  \sum_{j=1}^s\kappa_j\nabla h_j(x) \ : \ \mu_i, \kappa_j \in \mathbb{R}, \mu_i \ge 0, \ \mu_i g_i(x) = 0, \ i = 1, \ldots, r \right \}.
\end{eqnarray*}
One can check that the set $N(\Omega, x)$ is a convex cone and if (MFCQ) holds, then $N(\Omega, x)$  is a closed set.

We are ready to formulate a nonsmooth version of \L ojasiewicz's gradient inequality with explicit exponent for the regularized gap function $\psi,$
which plays a key role in establishing our error bounds (see Theorems~\ref{Theorem41}~and~\ref{Theorem42} below).

\begin{theorem} \label{Theorem31}
Assume that \emph{(MFCQ)} holds on $\Omega.$ For each $\bar{x} \in \Omega,$ there exist constants $c > 0$ and $\epsilon > 0$ such that for all $x \in \Omega \cap \mathbb{B}_{\epsilon}(\bar{x}),$
\begin{eqnarray} \label{PT2}
\inf\{\|w\| \ : \ w \in \partial^0 \psi(x) + N(\Omega, x) \} &\ge& c |\psi(x) - \psi(\bar{x})|^{1 - \alpha},
\end{eqnarray}
where $\alpha := \frac{1}{\mathscr{R}(n(n + 3) + r(n + 2) + s(n + 2), d + 2)}$ and the function $\mathscr{R}(\cdot, \cdot)$ is defined in \eqref{RFunction}.
\end{theorem}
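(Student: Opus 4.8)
The plan is to deduce the asserted nonsmooth gradient inequality for $\psi$ from the \emph{smooth} one of Theorem~\ref{LojasiewiczGradientInequality}, applied to a suitably built polynomial $G$ on $\mathbb{R}^N$ with $N := n(n+3)+r(n+2)+s(n+2)$ and $\deg G\le d+2$; then the exponent $\alpha = 1/\mathscr{R}(N,d+2)$ comes out automatically. First I would introduce lifting coordinates $z = \big(x,\,y^0,\dots,y^n,\,\lambda_1,\dots,\lambda_n,\,(\tau^k_i)_{k,i},\,(\nu^k_j)_{k,j},\,(\eta_i)_i,\,(\kappa_j)_j\big)\in\mathbb{R}^N$, with the convention $\lambda_0:=1-\sum_{k=1}^n\lambda_k$; the number of coordinates is exactly $n + n(n+1) + n + (n+1)r + (n+1)s + r + s = N$. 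Here $y^0,\dots,y^n$ are Carath\'eodory points for $\partial^0\psi(x)=\mathrm{co}\{\nabla_x\phi(x,y):y\in\Omega(x)\}$ (Lemma~\ref{Lemma32}), $\big((\tau^k_i)^2,\nu^k_j\big)$ are the \emph{scaled} KKT multipliers of $\max_{y\in\Omega}\phi(x,y)$ at $y^k$, and $\big((\eta_i)^2,\kappa_j\big)$ generate $N(\Omega,x)$; squaring $\tau^k_i$ and $\eta_i$ is the key device that forces the complementarity terms to have vanishing partials. Set
\[
G(z) := \sum_{k=0}^n \lambda_k\,\phi(x,y^k) \;-\; \sum_{k=0}^n\sum_{i=1}^r (\tau^k_i)^2 g_i(y^k) \;-\; \sum_{k=0}^n\sum_{j=1}^s \nu^k_j h_j(y^k) \;+\; \sum_{i=1}^r (\eta_i)^2 g_i(x) \;+\; \sum_{j=1}^s \kappa_j h_j(x),
\]
a polynomial of degree at most $d+2$, and let $\mathcal{L}\subset\mathbb{R}^N$ be the closed set defined by: $x\in\Omega$; $g_i(y^k)\le 0$ and $h_j(y^k)=0$ for all $i,j,k$; $\phi(x,y^k)=\psi(x)$ for all $k$ (equivalently $y^k\in\Omega(x)$); $\lambda_k\ge 0$ for all $k$; the scaled stationarity relations $\lambda_k\nabla_y\phi(x,y^k)=\sum_i(\tau^k_i)^2\nabla g_i(y^k)+\sum_j\nu^k_j\nabla h_j(y^k)$ for all $k$; and the complementarity relations $(\tau^k_i)^2 g_i(y^k)=0$ and $(\eta_i)^2 g_i(x)=0$.

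The two identities that drive everything are: for each $z\in\mathcal{L}$, \emph{(i)} $G(z)=\psi(x)$, because every constraint term vanishes by feasibility and complementarity; and \emph{(ii)} $\nabla G(z)=\big(w(z),0,\dots,0\big)$, where $w(z):=\nabla_x G(z)=\sum_k\lambda_k\nabla_x\phi(x,y^k)+\sum_i(\eta_i)^2\nabla g_i(x)+\sum_j\kappa_j\nabla h_j(x)$, since $\partial G/\partial y^k$ vanishes by scaled stationarity, $\partial G/\partial\lambda_k=\phi(x,y^k)-\phi(x,y^0)=0$, $\partial G/\partial\tau^k_i=-2\tau^k_i g_i(y^k)=0$ and $\partial G/\partial\eta_i=2\eta_i g_i(x)=0$ by complementarity, $\partial G/\partial\nu^k_j=-h_j(y^k)=0$, and $\partial G/\partial\kappa_j=h_j(x)=0$. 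Hence $\|\nabla G(z)\|=\|w(z)\|$ on $\mathcal{L}$, and $w(z)\in\partial^0\psi(x)+N(\Omega,x)$ (the first sum lies in $\partial^0\psi(x)$ by Lemma~\ref{Lemma32}, being a convex combination of the $\nabla_x\phi(x,y^k)$ with $y^k\in\Omega(x)$; the remaining terms lie in $N(\Omega,x)$ because $(\eta_i)^2\ge 0$ and $(\eta_i)^2 g_i(x)=0$). Conversely, every $w\in\partial^0\psi(x)+N(\Omega,x)$ equals $w(z)$ for some $z\in\mathcal{L}$ lying over $x$: write $w=p+q$ with $p\in\partial^0\psi(x)$ and $q\in N(\Omega,x)$, express $p$ as a Carath\'eodory convex combination $\sum_k\lambda_k\nabla_x\phi(x,y^k)$ with $y^k\in\Omega(x)$, use the KKT conditions at each $y^k$ (available because \emph{(MFCQ)} holds at every point of $\Omega$) to obtain multipliers $\sigma^k_i\ge 0$, $\hat\nu^k_j$, set $\tau^k_i:=\sqrt{\lambda_k\sigma^k_i}$ and $\nu^k_j:=\lambda_k\hat\nu^k_j$, and read $(\eta_i)^2,\kappa_j$ off $q$.

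It then remains to produce $c,\epsilon>0$ with $\|\nabla G(z)\|\ge c\,|G(z)-\psi(\bar x)|^{1-\alpha}$ for all $z\in\mathcal{L}$ lying over points $x\in\Omega\cap\mathbb{B}_\epsilon(\bar x)$ whose $w(z)$ attains $\inf\{\|w'\| : w'\in\partial^0\psi(x)+N(\Omega,x)\}$ (which is attained, since $\partial^0\psi(x)$ is compact and $N(\Omega,x)$ is closed under \emph{(MFCQ)}); by (i) and (ii) this is exactly \eqref{PT2}. I expect the real obstacle to be a uniform a priori bound: as $x$ ranges over $\Omega\cap\mathbb{B}_1(\bar x)$, these lifts stay in a \emph{fixed} compact set $\mathcal{K}\subset\mathbb{R}^N$. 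This needs uniform bounds on the sets $\Omega(x)$ (Lemma~\ref{Lemma31}(ii)), on the KKT multipliers of $\max_{y\in\Omega}\phi(x,\cdot)$ at the relevant $y^k\in\Omega(x)$, and on the multipliers representing the $N(\Omega,x)$-part of the minimal $w$ --- all consequences of \emph{(MFCQ)}, most conveniently through its dual form (no nontrivial nonnegative combination of active inequality-constraint gradients is annihilated by a combination of the equality-constraint gradients) together with a routine compactness argument. Granting $\mathcal{K}$, apply Theorem~\ref{LojasiewiczGradientInequality} at each point of the compact set $\{z\in\mathcal{K} : G(z)=\psi(\bar x)\}$ and patch the finitely many resulting local inequalities by a finite subcover to obtain $c>0$, $\epsilon_1>0$ with $\|\nabla G(z)\|\ge c\,|G(z)-\psi(\bar x)|^{1-\alpha}$ for all $z\in\mathcal{K}$ with $|G(z)-\psi(\bar x)|<\epsilon_1$; finally shrink $\epsilon$ so that $x\in\Omega\cap\mathbb{B}_\epsilon(\bar x)$ forces $x\in\mathbb{B}_1(\bar x)$ and $|\psi(x)-\psi(\bar x)|<\epsilon_1$, which together with the identities on $\mathcal{L}$ yields \eqref{PT2}.
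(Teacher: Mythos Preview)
Your strategy is the paper's strategy: lift to $\mathbb{R}^N$ by adjoining Carath\'eodory points $y^k\in\Omega(x)$, KKT multipliers at each $y^k$, and multipliers for the $N(\Omega,x)$ part; build a polynomial whose value on the lift equals $\psi(x)$ and whose full gradient collapses to $(w,0,\dots,0)$; then invoke Theorem~\ref{LojasiewiczGradientInequality}. Your identities (i) and (ii) are precisely the paper's computation $P(x,a,b)=\psi(x)$, $\nabla P(x,a,b)=(w,0,0)$.

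There are two genuine differences in execution. First, your ``scaled'' multipliers $\tau^k_i=\sqrt{\lambda_k\sigma^k_i}$, $\nu^k_j=\lambda_k\hat\nu^k_j$ make the KKT terms $(\tau^k_i)^2 g_i(y^k)$ of degree $d+2$, matching the exponent in the statement; the paper instead keeps $\lambda_k(\mu^k_i)^2 g_i(y^k)$ inside $P$, which on its face has degree $d+3$, so your parametrization is the cleaner one here. Second, for uniformity the paper proves upper H\"older continuity of the lift-valued maps $x\mapsto A(x)$ and $x\mapsto B_R(x)$ (Lemmas~\ref{Lemma33}, \ref{Lemma38}, \ref{Lemma39}, each resting on the classical \L ojasiewicz inequality, Theorem~\ref{ClassicalLojasiewiczInequality}) and then covers the compact fibre $A(\bar x)\times B_R(\bar x)$ over $\bar x$ by finitely many gradient-inequality balls. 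You bypass these lemmas: bound all relevant lifts in a single compact $\mathcal{K}$ via (MFCQ)-type multiplier bounds (the paper's Lemma~\ref{Lemma37}), cover the level set $\{z\in\mathcal{K}:G(z)=\psi(\bar x)\}$, and use that on $\mathcal{K}\setminus U$ the continuous function $|G-\psi(\bar x)|$ is bounded away from zero to manufacture $\epsilon_1$. This is more elementary (no appeal to Theorem~\ref{ClassicalLojasiewiczInequality}) and suffices for the theorem; the paper's H\"older-continuity route costs more but yields set-valued stability results of independent interest. Your sketch of the a~priori bound and of the covering step is terse but correct; when you write it out, do spell out that for the minimal $w$ one first bounds $\|w\|$ by $\sup\{\|v\|:v\in\partial^\circ\psi(x)\}$ over $x\in\mathbb{B}_1(\bar x)$, hence $q=w-v$ is bounded, and only then invoke (MFCQ) to bound $(\eta_i,\kappa_j)$.
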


The proof of Theorem~\ref{Theorem31} will be divided into several steps, which are summarized as follows:
\begin{enumerate}[{\rm (1)}]
\item Prove the set-valued map $\mathbb{R}^n \rightrightarrows \mathbb{R}^n, x \mapsto \Omega(x),$ and certain Lagrange multipliers are upper H\"older continuous.
\item Estimate from above the Clarke subdifferential $\partial^\circ \psi(x).$
\item Construct explicitly a polynomial function $P$ based on this estimate and the definition of the cone $N(\Omega, x).$
\item Prove the inequality~\eqref{PT2} by applying Theorem~\ref{LojasiewiczGradientInequality} to $P.$
\end{enumerate}

We first show the upper H\"older continuity of the set-valued map $\mathbb{R}^n \rightrightarrows \mathbb{R}^n, x \mapsto \Omega(x).$

\begin{lemma} \label{Lemma33}
Let $\bar{x} \in \mathbb{R}^n.$ For each $\epsilon > 0$ there exist constants $c > 0$ and $\alpha > 0$ such that
\begin{eqnarray*}
\Omega(x) \subset \Omega(\bar{x}) + c\|x - \bar{x}\| ^\alpha \mathbb{B} \quad \textrm{ for all } \quad x \in \mathbb{B}_\epsilon(\bar{x}).
\end{eqnarray*}
\end{lemma}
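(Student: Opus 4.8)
The plan is to reduce the claim to a single application of the classical \L ojasiewicz inequality (Theorem~\ref{ClassicalLojasiewiczInequality}), applied to an auxiliary continuous semialgebraic function whose zero set is exactly $\{\bar{x}\} \times \Omega(\bar{x})$. The starting point is to record that the graph of the solution map,
$$A := \{(x, y) \in \mathbb{R}^n \times \mathbb{R}^n \ : \ y \in \Omega(x)\} = \{(x, y) \ : \ y \in \Omega, \ \psi(x) = \phi(x, y)\},$$
is \emph{closed}: if $(x_k, y_k) \to (x, y)$ with $y_k \in \Omega$ and $\psi(x_k) = \phi(x_k, y_k)$, then $y \in \Omega$ (as $\Omega$ is closed), $\phi(x_k, y_k) \to \phi(x, y)$ by continuity of $\phi$, and $\psi(x_k) \to \psi(x)$ since $\psi$ is locally Lipschitz by Lemma~\ref{Lemma32}, whence $\psi(x) = \phi(x, y)$. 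The set $A$ is also \emph{semialgebraic}, since $\Omega$ is semialgebraic, $\phi$ is a polynomial, and $\psi$ is semialgebraic (as already observed after Lemma~\ref{Lemma31}), so Tarski--Seidenberg applies.

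Now fix $\epsilon > 0$ and let $R > 0$ be the constant furnished by Lemma~\ref{Lemma31}(ii), so that $\Omega(x) \subset \mathbb{B}_R$ for every $x \in \mathbb{B}_\epsilon(\bar{x})$; in particular $\Omega(\bar{x}) \subset \mathbb{B}_R$, and $\Omega(\bar{x}) \ne \emptyset$ by Lemma~\ref{Lemma31}(i). Define
$$\hat{h} \colon \mathbb{R}^n \times \mathbb{R}^n \to \mathbb{R}, \qquad \hat{h}(x, y) := \|x - \bar{x}\| + \mathrm{dist}\big((x, y), A\big).$$
Then $\hat{h}$ is continuous, and it is semialgebraic because the distance function to the semialgebraic set $A$ is semialgebraic. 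Since $A$ is closed, $\hat{h}(x, y) = 0$ forces $x = \bar{x}$ and $(x, y) \in A$, so $\hat{h}^{-1}(0) = \{\bar{x}\} \times \Omega(\bar{x})$; this set is nonempty and contained in the compact set $K := \mathbb{B}_\epsilon(\bar{x}) \times \mathbb{B}_R$.

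Applying Theorem~\ref{ClassicalLojasiewiczInequality} to $\hat{h}$ on $K$ produces constants $c > 0$ and $\alpha > 0$ with $c\,\mathrm{dist}\big((x, y), \hat{h}^{-1}(0)\big) \le \hat{h}(x, y)^{\alpha}$ for all $(x, y) \in K$. Given any $x \in \mathbb{B}_\epsilon(\bar{x})$ and any $y \in \Omega(x)$, we have $y \in \mathbb{B}_R$, hence $(x, y) \in A \cap K$, so $\mathrm{dist}((x, y), A) = 0$ and $\hat{h}(x, y) = \|x - \bar{x}\|$. Combining this with the elementary inequality $\mathrm{dist}(y, \Omega(\bar{x})) \le \mathrm{dist}\big((x, y), \{\bar{x}\} \times \Omega(\bar{x})\big)$ yields $\mathrm{dist}(y, \Omega(\bar{x})) \le \tfrac{1}{c}\,\|x - \bar{x}\|^{\alpha}$, which is the asserted inclusion after relabeling $\tfrac{1}{c}$ as $c$.

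I do not expect a serious obstacle here, but the step that must be done carefully is the construction of $\hat{h}$: adding $\mathrm{dist}(\cdot, A)$ is what pins the zero set inside the graph $A$ while keeping the function defined on all of $\mathbb{R}^{2n}$, so that Theorem~\ref{ClassicalLojasiewiczInequality} applies verbatim. A naive candidate such as $\|x - \bar{x}\| + \mathrm{dist}(y, \Omega(\bar{x}))$ would be circular, since its smallness along $A$ near $\{\bar{x}\} \times \Omega(\bar{x})$ is essentially what the lemma asserts; likewise, the closedness of $A$ (equivalently, continuity of $\psi$) is the ingredient that makes $\hat{h}^{-1}(0)$ come out exactly right, and is the only point where Lemma~\ref{Lemma32} is genuinely needed.
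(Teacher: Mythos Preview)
Your proof is correct. Both arguments rest on a single application of the classical \L ojasiewicz inequality, but the auxiliary functions differ. The paper builds an explicit residual
\[
\Gamma(x,y)=[\psi(x)-\phi(x,y)]_+ + \sum_i [g_i(y)]_+ + \sum_j |h_j(y)|,
\]
applies Theorem~\ref{ClassicalLojasiewiczInequality} to $\Gamma(\bar{x},\cdot)$ on the compact $y$-ball $\mathbb{B}_R$, and then transfers the estimate in $x$ via the Lipschitz bound $|\Gamma(x,y)-\Gamma(\bar{x},y)|\le L\|x-\bar{x}\|$. Your route is more abstract: you encode $\Omega(\cdot)$ through its closed semialgebraic graph $A$, set $\hat h(x,y)=\|x-\bar{x}\|+\mathrm{dist}((x,y),A)$, and apply \L ojasiewicz once in the product space. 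The paper's approach is slightly more hands-on and keeps the \L ojasiewicz step in $n$ variables rather than $2n$; yours is cleaner in that it avoids the separate Lipschitz argument and would work verbatim for any closed semialgebraic set-valued map with locally bounded values, not just this particular $\Omega(\cdot)$. One minor point: your invocation of Lemma~\ref{Lemma32} for the continuity of $\psi$ is fine, though a direct argument from Lemma~\ref{Lemma31}(ii) (uniform coercivity near $\bar{x}$) would suffice and avoids the forward reference.
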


\begin{proof}
Define the function $\Gamma \colon \mathbb{R}^n \times \mathbb{R}^n \rightarrow \mathbb{R}$ by
$$\Gamma(x, y) := [\psi(x) - \phi(x, y)]_+ + \sum_{i = 1}^r [g_i(y)]_+ + \sum_{j = 1}^s |h_j(y)|.$$
It is easy to check that $\Gamma$ is locally Lipschitz and semialgebraic. Furthermore, we have
\begin{eqnarray*}
\Omega(x)
& = &  \{y \in \Omega \ : \ \psi(x) - \phi(x, y) =  0\} \\
& = &  \{y \in \mathbb{R}^n \ : \ \Gamma(x, y) = 0\}.
\end{eqnarray*}

Let $\epsilon > 0.$ By Lemma \ref{Lemma31}, there exists a constant $R > 0$ such that $\Omega(x) \subset \mathbb{B}_R$ for all $x \in \mathbb{B}_\epsilon(\bar{x}).$ Since $\mathbb{B}_R$ is a compact set, it follows from the classical \L ojasiewicz inequality (see Theorem~\ref{ClassicalLojasiewiczInequality}) that there are constants $c > 0$ and $\alpha > 0$ such that
\begin{eqnarray*}
c\, \mathrm{dist}(y , \Omega(\bar{x})) & \le & |\Gamma(\bar{x}, y)|^{\alpha} \quad \textrm{ for all } \quad y \in \mathbb{B}_R.
 \end{eqnarray*}

On the other hand, since $\Gamma$ is locally Lipschitz, it is globally Lipschitz on the compact set $\mathbb{B}_\epsilon(\bar{x}) \times \mathbb{B}_R;$  in particular, there exists a constant $L > 0$ such that
 \begin{eqnarray*}
|\Gamma(x, y) - \Gamma(\bar{x}, y)|  & \le & L \|x - \bar{x}\| \quad \textrm{ for all } \quad (x, y) \in \mathbb{B}_\epsilon(\bar{x}) \times \mathbb{B}_R.
\end{eqnarray*}

Let $x \in \mathbb{B}_\epsilon(\bar{x}),$ and take an arbitrary  $y \in \Omega(x).$ Then $y \in \mathbb{B}_R$ and $\Gamma(x, y) = 0.$ Therefore,
 \begin{eqnarray*}
c\, \mathrm{dist}(y , \Omega(\bar{x}))
& \le & |\Gamma(\bar{x}, y)|^{\alpha} \ = \ |\Gamma(x, y) - \Gamma(\bar{x}, y)|^{\alpha} \\
& \le & L^{\alpha} \|x - \bar{x}\|^{\alpha}.
 \end{eqnarray*}
This implies immediately the required statement.
\end{proof}

The next three lemmas provide estimates for the Fr\'echet, limiting and Clarke subdifferentials of the function $\psi.$

\begin{lemma} \label{Lemma34}
Assume that \emph{(MFCQ)} holds on $\Omega.$ Let ${x} \in \mathbb{R}^n.$  For each $y \in \Omega(x),$ it holds that
\begin{eqnarray*}
\hat \partial (-\psi) (x)
& \subset & \left \{ v \ : \ (v, 0) \in - \nabla \phi(x, y) + \{0\} \times N(\Omega, y) \right\}.
\end{eqnarray*}
\end{lemma}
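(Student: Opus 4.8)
The plan is to view $-\psi$ as a marginal (optimal value) function, apply the standard Fr\'echet-subdifferential estimate for such functions at a minimizer, and then evaluate the resulting subdifferential of the ``joint'' objective by an exact sum rule together with the normal cone description of $\Omega$ furnished by (MFCQ).

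First I would rewrite the problem. Set $\theta \colon \mathbb{R}^n \times \mathbb{R}^n \to \mathbb{R} \cup \{+\infty\}$ by $\theta(x, y) := -\phi(x, y)$ if $y \in \Omega$ and $\theta(x, y) := +\infty$ otherwise; thus $\theta = -\phi + \iota_{\mathbb{R}^n \times \Omega}$, where $\iota_A$ denotes the indicator function of $A$, and $-\psi(x) = \inf_{y \in \mathbb{R}^n} \theta(x, y)$. Fix $x \in \mathbb{R}^n$ and $y \in \Omega(x)$; then $y$ realizes this infimum and $\theta(x, y) = -\psi(x)$. The crucial (and elementary) step is the inclusion
$$\hat{\partial}(-\psi)(x) \subset \big\{ v \in \mathbb{R}^n \ : \ (v, 0) \in \hat{\partial}\theta(x, y) \big\},$$
which I would verify straight from the definition: given $v \in \hat{\partial}(-\psi)(x)$ and $\varepsilon > 0$, pick $\delta > 0$ with $-\psi(x + h) - (-\psi(x)) - \langle v, h \rangle \ge -\varepsilon \|h\|$ whenever $\|h\| < \delta$ (this also holds trivially for $h = 0$); then for $(h, k)$ with $0 < \|(h, k)\| < \delta$, using $\theta(x + h, y + k) \ge -\psi(x + h)$, $\theta(x, y) = -\psi(x)$ and $\|h\| \le \|(h, k)\|$, one obtains $\theta(x + h, y + k) - \theta(x, y) - \langle (v, 0), (h, k) \rangle \ge -\varepsilon \|(h, k)\|$, that is, $(v, 0) \in \hat{\partial}\theta(x, y)$. (This estimate is also recorded in \cite{Mordukhovich2006-1}.)

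Next I would compute $\hat{\partial}\theta(x, y)$. Since $\phi$ is a polynomial, $-\phi$ is smooth, so the exact sum rule for the Fr\'echet subdifferential gives $\hat{\partial}\theta(x, y) = -\nabla\phi(x, y) + \hat{\partial}\iota_{\mathbb{R}^n \times \Omega}(x, y) = -\nabla\phi(x, y) + \hat{N}(\mathbb{R}^n \times \Omega; (x, y))$, where $\hat{N}$ denotes the Fr\'echet (regular) normal cone. As the Fr\'echet normal cone of a Cartesian product is the product of the Fr\'echet normal cones and $\hat{N}(\mathbb{R}^n; x) = \{0\}$, this equals $-\nabla\phi(x, y) + \{0\} \times \hat{N}(\Omega; y)$. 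Finally, (MFCQ) implies the Abadie constraint qualification at $y \in \Omega$, hence the contingent (Bouligand) tangent cone to $\Omega$ at $y$ coincides with the linearization cone $\{ d : \langle \nabla g_i(y), d \rangle \le 0 \text{ for active } i, \ \langle \nabla h_j(y), d \rangle = 0 \text{ for all } j \}$; since $\hat{N}(\Omega; y)$ is always the polar of the contingent cone, Farkas' lemma identifies it with $N(\Omega, y)$. In particular $\hat{N}(\Omega; y) \subset N(\Omega, y)$, and combining the displays yields $\hat{\partial}(-\psi)(x) \subset \{ v : (v, 0) \in -\nabla\phi(x, y) + \{0\} \times N(\Omega, y) \}$, as asserted.

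I expect no serious obstacle: the analytic input — the marginal function estimate and the fact that (MFCQ) forces $\hat{N}(\Omega; y) = N(\Omega, y)$ for every $y \in \Omega$ — is classical, and the remainder is bookkeeping. The one point to be careful with is the $\mathbb{R}^n \times \mathbb{R}^n$ splitting: applying the smooth-plus-lsc sum rule, the product formula for $\hat{N}$, and the final ``slice'' $\{ v : (v, 0) \in \cdot \}$ in the correct order, so that the $y$-components are absorbed into $N(\Omega, y)$ while the $x$-component is pinned down by $-\nabla_x \phi(x, y)$.
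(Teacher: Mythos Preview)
Your argument is correct and rests on the same underlying idea as the paper's proof: since $y \in \Omega(x)$ realizes the supremum defining $\psi(x)$, first-order information at the pair $(x,y)$ controls $\hat\partial(-\psi)(x)$. The technical routes differ, however. The paper adds an auxiliary penalty $\epsilon\|x'-x\|$, notes that $(x,y)$ minimizes the penalized function over $\mathbb{B}_\delta(x)\times\Omega$, applies Lagrange multipliers and the sum rule to obtain an inclusion with an extra $\epsilon(\mathbb{B}\times\{0\})$ term, and then lets $\epsilon\to 0$ --- this last step uses that $N(\Omega,y)$ is closed under (MFCQ). You instead invoke the marginal-function Fr\'echet estimate directly (and verify it inline), compute $\hat\partial\theta(x,y) = -\nabla\phi(x,y) + \{0\}\times\hat N(\Omega;y)$ via the exact smooth-plus-indicator sum rule, and then use (MFCQ) $\Rightarrow$ Abadie together with Farkas to identify $\hat N(\Omega;y)$ with $N(\Omega,y)$. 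Your version avoids the $\epsilon\to 0$ limit and is somewhat more streamlined; the paper's version, in exchange, does not need to quote the tangent/normal-cone description of $\Omega$ under the Abadie constraint qualification, only the closedness of $N(\Omega,y)$.
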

\begin{proof}
Let $y \in \Omega(x).$ Take arbitrary $v \in \hat\partial (-\psi) (x)$ and $\epsilon >0.$ By the definition of the Fr\'echet subdifferential, there exists a constant $\delta >0$ such that
\begin{eqnarray*}
-\psi (x') + \psi (x) - \langle v, x' - x\rangle &\geq & - \epsilon \|x' - x\| \quad \textrm{ for all } \quad x' \in \mathbb{B}_{\delta}(x).
\end{eqnarray*}
Then for any $(x', y')\in \mathbb{B}_{\delta}(x)\times \Omega$, we have
\begin{eqnarray*}
-\phi (x',y')-\langle v, x' - x\rangle + \epsilon \|x'-x\| &\geq& -\psi (x')-\langle v, x'-x\rangle +\epsilon \|x'-x\| \\
&\geq& -\psi (x) \ = \ -\phi (x,y),
\end{eqnarray*}
which yields that $(x,y)$ is a minimizer of the (locally Lipschitz) function 
$$\mathbb{B}_{\delta}(x)\times \Omega \rightarrow \mathbb{R}, \quad (x',y') \mapsto -\phi (x',y')-\langle v, x'-x\rangle +\epsilon \|x'-x\|.$$ 
By Lagrange's multipliers theorem and  the sum rule (see, for example, \cite[Theorem 3.36]{Mordukhovich2006-1}), we have
\begin{eqnarray*}
(0, 0) & \in & -\nabla \phi (x, y) - (v, 0)+\epsilon (\mathbb{B} \times \{0\}) + \{0\}\times N(\Omega,y).
\end{eqnarray*}
Letting $\epsilon \to 0$ yields
\begin{eqnarray*}
(v, 0) & \in & -\nabla \phi (x, y)  + \{0\}\times N(\Omega, y),
\end{eqnarray*}
which completes the proof.
\end{proof}

\begin{lemma} \label{Lemma35}
Assume that \emph{(MFCQ)} holds on $\Omega.$ For all $x \in \mathbb{R}^n,$ we have
\begin{eqnarray*}
\partial (-\psi) (x)
& \subset & \cup_{y \in \Omega(x)}\left \{ v : (v, 0) \in - \nabla \phi(x, y) + \{0\} \times N(\Omega, y) \right\}.
\end{eqnarray*}
\end{lemma}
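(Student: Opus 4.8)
The plan is to upgrade the Fr\'echet-subdifferential estimate of Lemma~\ref{Lemma34} to the limiting subdifferential by a standard sequential (closedness-of-graph) argument, the three ingredients being: the local boundedness of the solution map $x \mapsto \Omega(x)$ from Lemma~\ref{Lemma31}(ii), the continuity of $\psi$ from Lemma~\ref{Lemma32}, and the closedness of the graph of the normal-cone map $y \mapsto N(\Omega, y)$ under (MFCQ).

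First I would fix $x \in \mathbb{R}^n$ and take $v \in \partial(-\psi)(x)$. By the very definition of the limiting subdifferential there are sequences $x^l \to x$ and $v^l \to v$ with $v^l \in \hat\partial(-\psi)(x^l)$; the requirement $-\psi(x^l) \to -\psi(x)$ in that definition is automatic since $\psi$ is (locally Lipschitz, hence) continuous by Lemma~\ref{Lemma32}. For each $l$, Lemma~\ref{Lemma31}(i) lets me pick $y^l \in \Omega(x^l)$, and then Lemma~\ref{Lemma34} gives $(v^l, 0) \in -\nabla\phi(x^l, y^l) + \{0\} \times N(\Omega, y^l)$ for every $l$. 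Comparing components, this means $v^l = -\nabla_x\phi(x^l, y^l)$ and $w^l := \nabla_y\phi(x^l, y^l) \in N(\Omega, y^l)$.

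Next I would extract a subsequence of $\{y^l\}$ converging to some $y \in \Omega(x)$: applying Lemma~\ref{Lemma31}(ii) with $\bar{x} = x$ and any fixed $\epsilon > 0$, the points $x^l$ lie in $\mathbb{B}_\epsilon(x)$ for $l$ large, so all such $y^l$ lie in one fixed ball; passing to a subsequence (not relabeled), $y^l \to y$. Since $\Omega$ is closed, $y \in \Omega$, and since $\psi(x^l) = \phi(x^l, y^l)$ for all $l$, letting $l \to \infty$ and using continuity of $\psi$ and of the polynomial $\phi$ yields $\psi(x) = \phi(x, y)$, i.e.\ $y \in \Omega(x)$. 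Because $\phi$ is a polynomial, $v^l \to -\nabla_x\phi(x, y)$ and $w^l \to \nabla_y\phi(x, y) =: w$, and by uniqueness of limits $v = -\nabla_x\phi(x, y)$.

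The hard part is the final passage to the limit: I must check that $w \in N(\Omega, y)$, i.e.\ that the graph of $y \mapsto N(\Omega, y)$ is closed under (MFCQ). I would do this directly: write $w^l = \sum_i \mu_i^l \nabla g_i(y^l) + \sum_j \kappa_j^l \nabla h_j(y^l)$ with $\mu_i^l \ge 0$ and $\mu_i^l g_i(y^l) = 0$; note that the inactive constraints at $y$ (those with $g_i(y) < 0$) are inactive along the tail of the sequence, so their multipliers vanish there. Then (MFCQ) at $y$ forces the multiplier sequence $\{(\mu^l, \kappa^l)\}$ to be bounded --- otherwise, after normalizing and passing to a limit, one would obtain a nontrivial nonnegative combination of the constraint gradients active at $y$ summing to zero, contradicting (MFCQ). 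Taking a convergent subsequence of the multipliers, the limit multipliers $\mu_i \ge 0$ satisfy $\mu_i g_i(y) = 0$, and passing to the limit in the expression for $w^l$ gives $w = \sum_i \mu_i \nabla g_i(y) + \sum_j \kappa_j \nabla h_j(y) \in N(\Omega, y)$. Combining the three limits, $(v, 0) \in -\nabla\phi(x, y) + \{0\} \times N(\Omega, y)$ with $y \in \Omega(x)$, which is exactly the asserted inclusion.
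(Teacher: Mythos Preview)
Your proof is correct and follows essentially the same sequential/graph-closure argument as the paper: pick approximating sequences from the definition of the limiting subdifferential, apply Lemma~\ref{Lemma34} at each term, extract a convergent subsequence of the maximizers $y^l$, and pass to the limit. The only cosmetic differences are that the paper cites Lemma~\ref{Lemma33} (rather than your direct continuity argument) to conclude $y \in \Omega(x)$, and leaves implicit the graph-closedness of $y \mapsto N(\Omega, y)$ under (MFCQ) that you spell out (their Lemma~\ref{Lemma37} is essentially the multiplier-boundedness step you reproduce).
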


\begin{proof}
Let $v \in \partial (-\psi)(x).$ The definition of the limiting subdifferential gives us the existence of sequences $\{x^k\}_{k \in \mathbb{N}}  \subset  \mathbb{R}^n$ and $\{v^k\}_{k \in \mathbb{N}} \subset \hat{\partial} (-\psi)(x^k)$ with 
\begin{eqnarray*}
\lim_{k \to \infty} x^k &=& x \quad  \textrm{ and } \quad \lim_{k \to \infty}v^k \ = \ v.
\end{eqnarray*}
For each integer number $k,$ take any $y^k \in \Omega(x^k).$ By Lemmas~\ref{Lemma31}~and~\ref{Lemma33} (and by choosing a subsequence, if necessary) we may assume, without loss of generality, that there exists $y \in \Omega(x)$ such that $y = \lim_{k \to \infty} y^k.$ On the other hand, by Lemma~\ref{Lemma34}, we have
$$(v^k, 0) \in -\nabla \phi(x^k, y^k) + \{0\} \times N(\Omega, y^k).$$
Letting $k$ tend to infinity, we get
$$(v, 0) \in -\nabla \phi(x, y) + \{0\} \times N(\Omega, y),$$ 
and so the desired conclusion follows.
\end{proof}

\begin{lemma} \label{Lemma36}
Assume that \emph{(MFCQ)} holds on $\Omega.$ For all $x \in \mathbb{R}^n,$ we have
\begin{eqnarray*}
\partial^\circ \psi (x) & \subset  & \mathrm{co}\left(\cup_{y \in \Omega(x)}\left \{ v : (v, 0) \in \nabla \phi(x, y) - \{0\} \times N(\Omega, y) \right\} \right).
\end{eqnarray*}
\end{lemma}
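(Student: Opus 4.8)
The plan is to derive this Clarke subdifferential estimate as a routine consequence of Lemma~\ref{Lemma35}, using the standard relations between the limiting and Clarke subdifferentials together with the symmetry of the Clarke subdifferential under sign change. First I would recall that, by Lemma~\ref{Lemma32}, $\psi$ is locally Lipschitz, so the Clarke subdifferential is defined and the identities recorded in Section~\ref{SectionPreliminaries} apply: for any locally Lipschitz $f$ one has $\partial^\circ f(x) = \mathrm{co}\,\partial f(x)$ and $\partial^\circ(-f)(x) = -\partial^\circ f(x)$. Applying the first identity to $-\psi$ and then the second to $\psi$ gives
$$\partial^\circ \psi(x) \ = \ -\partial^\circ(-\psi)(x) \ = \ -\mathrm{co}\,\partial(-\psi)(x) \ = \ \mathrm{co}\big(-\partial(-\psi)(x)\big),$$
using $\mathrm{co}(-A) = -\mathrm{co}(A)$ in the last step.

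Next I would insert the estimate of Lemma~\ref{Lemma35},
$$\partial(-\psi)(x) \ \subset \ \cup_{y\in\Omega(x)}\left\{v \ : \ (v,0) \in -\nabla\phi(x,y) + \{0\}\times N(\Omega,y)\right\},$$
and negate it set-wise. Writing $w = -v$, the condition $(v,0)\in -\nabla\phi(x,y)+\{0\}\times N(\Omega,y)$ becomes $(-w,0)\in -\nabla\phi(x,y)+\{0\}\times N(\Omega,y)$, which is the same as $(w,0)\in \nabla\phi(x,y)-\{0\}\times N(\Omega,y)$. Hence
$$-\partial(-\psi)(x) \ \subset \ \cup_{y\in\Omega(x)}\left\{w \ : \ (w,0) \in \nabla\phi(x,y) - \{0\}\times N(\Omega,y)\right\}.$$
Taking convex hulls of both sides, which preserves inclusion, and combining with the identity from the first paragraph yields precisely the claimed containment for $\partial^\circ\psi(x)$.

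There is no genuine obstacle here; the substantive content, namely the upper H\"older continuity of $x \mapsto \Omega(x)$ and the limiting passage through the Fr\'echet subdifferential, has already been carried out in Lemmas~\ref{Lemma33} and~\ref{Lemma35}. The only points requiring a little care are bookkeeping: keeping track of the sign flips in the term $\{0\}\times N(\Omega,y)$ (here $-\{0\}\times N(\Omega,y)$ must be read as $\{0\}\times(-N(\Omega,y))$ and should not be silently identified with $\{0\}\times N(\Omega,y)$, since $N(\Omega,y)$ is a convex cone but not a subspace), noting that $\nabla\phi(x,y) = (\nabla_x\phi(x,y),\nabla_y\phi(x,y))$ so that it is the first-coordinate equation that carries the information on $w$, and using $\mathrm{co}(-A) = -\mathrm{co}(A)$ for an arbitrary subset $A$ of $\mathbb{R}^n$.
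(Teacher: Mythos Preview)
Your proposal is correct and follows essentially the same approach as the paper: the paper's proof simply writes $\partial^\circ \psi (x) = - \partial^\circ (-\psi) (x) = - \mathrm{co}(\partial(-\psi) (x))$ and then invokes Lemma~\ref{Lemma35}. Your version is just a more detailed rendering of the same argument, with the sign bookkeeping made explicit.
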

\begin{proof}
Indeed, it follows from the definitions that
$$\partial^\circ \psi (x) \ = \ - \partial^\circ (-\psi) (x) \ = \ - \mathrm{co}(\partial(-\psi) (x)).$$
This combined with Lemma~\ref{Lemma35} leads to the desired assertion.
\end{proof}

The following lemma is simple but useful.
\begin{lemma}\label{Lemma37}
Assume that \emph{(MFCQ)} holds on $\Omega.$ Let $\{x^k\}_{k \in \mathbb{N}} \subset \Omega$ and $\{v^k\}_{k \in \mathbb{N}} \subset N(\Omega, x^k)$ be two bounded sequences such that
\begin{eqnarray*}
v^k &=& \sum_{i=1}^r \mu_i^k \nabla g_i(x^k) +  \sum_{j=1}^s \kappa_j^k \nabla h_j(x^k), \\
0 &=& \mu_i^k g_i(x^k), \ \mu_i^k \ge 0,  \ \textrm{ for } \ i = 1, \ldots, r,
\end{eqnarray*}
for some $\mu^k := (\mu_1^k, \ldots, \mu_r^k) \in \mathbb{R}^r$ and $\kappa^k := (\kappa_1^k, \ldots, \kappa_s^k) \in \mathbb{R}^s.$ Then the sequences
$\{\mu^k\}_{k \in \mathbb{N}}$ and $\{\kappa^k\}_{k \in \mathbb{N}}$ are bounded.
\end{lemma}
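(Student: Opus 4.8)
The plan is to argue by contradiction, exploiting the compactness that (MFCQ) buys us at every point of $\Omega$ together with a standard normalization/limiting argument. Suppose, for contradiction, that at least one of the two sequences $\{\mu^k\}$, $\{\kappa^k\}$ is unbounded; then $t_k := \|\mu^k\| + \|\kappa^k\| \to \infty$ along a subsequence, and we may pass to a further subsequence along which $x^k \to \bar{x}$ for some $\bar{x} \in \Omega$ (this uses boundedness of $\{x^k\}$, and $\bar{x} \in \Omega$ since $\Omega$ is closed). Dividing the first displayed identity by $t_k$ and writing $\tilde{\mu}^k := \mu^k / t_k$, $\tilde{\kappa}^k := \kappa^k / t_k$, the pair $(\tilde{\mu}^k, \tilde{\kappa}^k)$ lies on the unit sphere of $\mathbb{R}^r \times \mathbb{R}^s$, so along a further subsequence it converges to some $(\tilde{\mu}, \tilde{\kappa})$ with $\|\tilde{\mu}\| + \|\tilde{\kappa}\| = 1$, $\tilde{\mu}_i \ge 0$. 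Since $\{v^k\}$ is bounded, $v^k / t_k \to 0$, and passing to the limit in
$$\frac{v^k}{t_k} \ = \ \sum_{i=1}^r \tilde{\mu}_i^k \nabla g_i(x^k) + \sum_{j=1}^s \tilde{\kappa}_j^k \nabla h_j(x^k)$$
gives $0 = \sum_{i=1}^r \tilde{\mu}_i \nabla g_i(\bar{x}) + \sum_{j=1}^s \tilde{\kappa}_j \nabla h_j(\bar{x})$, using continuity of the gradients of the polynomials $g_i, h_j$.

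Next I would check that $\tilde{\mu}_i = 0$ whenever $g_i(\bar{x}) < 0$: for such an $i$, continuity gives $g_i(x^k) < 0$ for all large $k$, whence the complementarity relation $\mu_i^k g_i(x^k) = 0$ forces $\mu_i^k = 0$, so $\tilde{\mu}_i^k = 0$ and hence $\tilde{\mu}_i = 0$. Thus the nontrivial relation $0 = \sum_{i \in I(\bar{x})} \tilde{\mu}_i \nabla g_i(\bar{x}) + \sum_{j=1}^s \tilde{\kappa}_j \nabla h_j(\bar{x})$ holds, where $I(\bar{x}) := \{i : g_i(\bar{x}) = 0\}$ is the active index set and $(\tilde{\mu}_i)_{i \in I(\bar{x})}$, $\tilde{\kappa}$ are not all zero, with $\tilde{\mu}_i \ge 0$. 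This is precisely the assertion that $\bar{x} \in \Omega$ is a point at which (MFCQ) fails, which contradicts our hypothesis; the contradiction establishes boundedness of $\{\mu^k\}$ and $\{\kappa^k\}$.

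The only place needing a small argument is the final step: deriving a failure of (MFCQ) from the existence of a nontrivial nonnegative combination of active constraint gradients that vanishes. If $(\tilde{\kappa}_j)$ is not identically zero while all $\tilde{\mu}_i = 0$, then $\sum_j \tilde{\kappa}_j \nabla h_j(\bar{x}) = 0$ with $\tilde{\kappa} \ne 0$ contradicts the linear independence of $\{\nabla h_j(\bar{x})\}$ required by (MFCQ). Otherwise some $\tilde{\mu}_{i_0} > 0$ with $i_0 \in I(\bar{x})$; applying the (MFCQ) direction $v$ at $\bar{x}$ (so $\langle \nabla g_i(\bar{x}), v\rangle < 0$ for $i \in I(\bar{x})$ and $\langle \nabla h_j(\bar{x}), v\rangle = 0$ for all $j$) and pairing it against the relation yields $0 = \sum_{i \in I(\bar{x})} \tilde{\mu}_i \langle \nabla g_i(\bar{x}), v\rangle < 0$, a contradiction. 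I expect this last dichotomy to be the only genuinely content-bearing obstacle; everything else is routine normalization and compactness. (I note that this is exactly the classical equivalence between (MFCQ) and boundedness of the Lagrange multiplier set, so the proof could alternatively be quoted from the literature, but the self-contained contradiction argument above is short enough to include.)
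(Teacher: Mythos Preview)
Your proof is correct and follows essentially the same contradiction-and-normalization argument as the paper: assume the multiplier sequences are unbounded, normalize, pass to a convergent subsequence (using boundedness of $\{x^k\}$ and $\{v^k\}$), and derive a nontrivial relation $0=\sum_i \bar\mu_i\nabla g_i(\bar x)+\sum_j\bar\kappa_j\nabla h_j(\bar x)$ with $\bar\mu_i\ge 0$ supported on the active set, which contradicts (MFCQ). The only cosmetic differences are that the paper normalizes by $\|(\mu^k,\kappa^k)\|$ rather than $\|\mu^k\|+\|\kappa^k\|$, obtains the active-set condition by passing $\mu_i^k g_i(x^k)=0$ to the limit rather than by your continuity argument, and leaves the final dichotomy (``deduce easily that $(\bar\mu,\bar\kappa)=(0,0)$'') implicit where you spell it out.
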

\begin{proof}
Arguing by contradiction, assume that
\begin{eqnarray*}
\lim_{k \to \infty} \|(\mu^k, \kappa^k)\| & = & +\infty.
\end{eqnarray*}
Passing to a subsequence if necessary we may  assume that the following limits exist:
\begin{eqnarray*}
(\bar{\mu}, \bar{\kappa}) &:=&  \lim_{k \to \infty} \frac{(\mu^k, \kappa^k)}{\|(\mu^k, \kappa^k)\|} \in \mathbb{R}^r \times \mathbb{R}^s,\\
\bar{x} &:=&  \lim_{k \to \infty} x^k \in \Omega.
\end{eqnarray*}
Then we have $(\bar{\mu}, \bar{\kappa}) \ne (0, 0)$ and
\begin{eqnarray*}
0 &=& \sum_{i=1}^r \bar{\mu}_i \nabla g_i(\bar{x}) +  \sum_{j = 1}^s \bar{\kappa}_j \nabla h_j(\bar{x}), \\
0 &=& \bar{\mu}_i g_i(\bar{x}), \ \bar{\mu}_i \ge 0,  \ \textrm{ for } \ i = 1, \ldots, r.
\end{eqnarray*}
Since (MFCQ) holds at $\bar{x} \in \Omega,$ the gradient vectors $\nabla h_j(\bar{x}), j = 1, \ldots, s,$ are linearly independent and there exists a vector $v \in \Bbb{R}^n$ such that $\langle \nabla g_i(\bar{x}), v \rangle < 0, i \in \{i : g_i(\bar{x}) = 0\}$ and $\langle \nabla h_j(\bar{x}), v \rangle  = 0, j = 1, \ldots, s.$ Therefore
\begin{eqnarray*}
0 &=& \sum_{i=1}^r \bar{\mu}_i \langle \nabla g_i(\bar{x}), v \rangle  +  \sum_{j = 1}^s \bar{\kappa}_j \langle \nabla h_j(\bar{x}), v \rangle \\
&=& \sum_{i=1}^r \bar{\mu}_i  \langle \nabla g_i(\bar{x}), v \rangle.
\end{eqnarray*}
Then we deduce easily that $(\bar{\mu}, \bar{\kappa}) = (0, 0),$ which is a contradiction.
\end{proof}

Clearly, we can write
\begin{eqnarray*}
N(\Omega, x) &=& \left \{\sum_{i=1}^r \mu_i^ 2 \nabla g_i(x) +  \sum_{j=1}^s\kappa_j\nabla h_j(x) \ : \ \mu \in \mathbb{R}^r, \ \kappa\in \mathbb{R}^s, \ \mu_i g_i(x) = 0, \ i = 1, \ldots, r \right \}.
\end{eqnarray*}
Here, $\mu := (\mu_1, \ldots, \mu_r) \in \mathbb{R}^r$ and $\kappa := (\kappa_1, \ldots, \kappa_s) \in \mathbb{R}^s.$ For simplicity of notation, we put
$$a := (y^1, \ldots, y^{n + 1}, \mu^1, \ldots, \mu^{n + 1}, \kappa^1, \ldots, \kappa^{n + 1}, \lambda) \in \mathbb{R}^{n(n + 1)} \times \mathbb{R}^{r(n + 1)} \times \mathbb{R}^{s(n + 1)} \times \mathbb{R}^{n}.$$ 
For each $x \in \mathbb{R}^n,$ let 
\begin{eqnarray*}
A(x) := \Big\{ a 
&:& \nabla_y \phi(x, y^k) - \sum_{i = 1}^{r} [\mu^k_i]^2 \nabla g_i(y^k) -  \sum_{j = 1}^{s} \kappa^k_j \nabla h_j(y^k)  = 0, \ k = 1, \ldots, n + 1, \\
&& \mu^k_i g_i(y^k) = 0, \ k = 1, \ldots, n + 1, i = 1, \ldots, r,  \ y^1, \ldots, y^{n + 1} \in \Omega(x), \  \lambda \in \mathbf{P} \Big \},
\end{eqnarray*}
where we put
\begin{eqnarray*}
\mathbf{P}&:=& \{\lambda := (\lambda _1, \ldots, \lambda_{n}) \in \mathbb{R}^{n} : \lambda_k \ge 0 \textrm{ and } \sum_{k = 1}^n \lambda_k \le 1\}.
\end{eqnarray*}

The next two lemmas show the upper H\"older continuity of Lagrange multipliers.

\begin{lemma} \label{Lemma38}
Let \emph{(MFCQ)} hold on $\Omega.$ The following statements hold:
\begin{enumerate}
\item [{\rm (i)}] For each $x \in \mathbb{R}^n,$  $A(x)$ is a nonempty compact set.

\item[{\rm (ii)}] Let $\bar{x} \in \mathbb{R}^n.$  For each $\epsilon > 0$ there exist constants $c > 0$ and $\alpha > 0$ such that
\begin{eqnarray*}
A(x) \subset A(\bar{x}) + c\|x - \bar{x}\| ^\alpha \mathbb{B} \quad \textrm{ for all } \quad x \in \mathbb{B}_\epsilon(\bar{x}).
\end{eqnarray*}
\end{enumerate}
\end{lemma}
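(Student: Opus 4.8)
The plan is to reduce both assertions to semialgebraic facts that have already been prepared. For part (i), nonemptiness is essentially a repackaging of the first-order optimality conditions: given $x$, Lemma~\ref{Lemma31} guarantees $\Omega(x) \ne \emptyset$, so pick any $y \in \Omega(x)$; since $y$ maximizes $\phi(x, \cdot)$ over $\Omega$ and (MFCQ) holds at $y$, the KKT conditions yield multipliers $\mu_i \ge 0$ (with $\mu_i g_i(y) = 0$) and $\kappa_j$ realizing $\nabla_y \phi(x, y) \in N(\Omega, y)$. Writing $\mu_i$ as $[\mu_i]^2$ after taking square roots of the nonnegative multipliers (this is the reason the set $A(x)$ is phrased with squared multipliers — it removes the sign constraint and makes the defining conditions polynomial equalities), and repeating this choice $n+1$ times (allowing the same $y$ and multipliers each time), together with any $\lambda \in \mathbf{P}$, produces a point of $A(x)$. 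For compactness: $A(x)$ is clearly closed (it is cut out by polynomial equations and the closed conditions $y^k \in \Omega(x)$, $\lambda \in \mathbf{P}$, using that $\Omega(x)$ is closed by Lemma~\ref{Lemma31} and that $N(\Omega, y)$ is closed under (MFCQ)), so it suffices to show boundedness. The $y^k$ are bounded because $\Omega(x)$ is compact (Lemma~\ref{Lemma31}(i)), $\lambda$ ranges in the compact set $\mathbf{P}$, and the multiplier vectors $\mu^k, \kappa^k$ are bounded by Lemma~\ref{Lemma37} applied with $x^k$ replaced by the (bounded) sequence of candidate points $y^k \in \Omega(x)$: any unbounded sequence of multipliers would contradict (MFCQ) at a limit point of the $y^k$'s in $\Omega$.

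For part (ii), the key observation is that the set-valued map $x \rightrightarrows A(x)$ has semialgebraic graph. Indeed, $\psi$ is semialgebraic (noted after Lemma~\ref{Lemma31}), hence $\phi$, $\nabla_y\phi$, the polynomials $g_i, h_j$ and their gradients are semialgebraic, and the condition "$y^k \in \Omega(x)$" is "$y^k \in \Omega$ and $\psi(x) = \phi(x, y^k)$", which is semialgebraic; the set $\mathbf{P}$ is semialgebraic. By the Tarski--Seidenberg theorem (Theorem~\ref{TarskiSeidenbergTheorem}) and Remark~\ref{TarskiSeidenbergRemark}, the graph $\{(x, a) : a \in A(x)\}$ is semialgebraic. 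Therefore the single-valued function
\begin{eqnarray*}
\delta \colon \mathbb{R}^n \times \mathbb{R}^n \to \mathbb{R}, \quad (x, x') \mapsto \mathrm{dist}\big(A(x), A(x')\big),
\end{eqnarray*}
wait — I want the one-sided Hausdorff excess, so define instead
\begin{eqnarray*}
\theta(x) := \sup_{a \in A(x)} \mathrm{dist}\big(a, A(\bar x)\big),
\end{eqnarray*}
which, for fixed $\bar x$, is a semialgebraic function of $x$ (again by Tarski--Seidenberg applied to the graph of $A$ and the distance function; the supremum over the compact set $A(x)$ from part (i) is attained, so $\theta$ is well-defined and finite). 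Clearly $\theta(\bar x) = 0$. If $\theta$ is continuous near $\bar x$ — which follows from part (i) together with the closedness of the graph of $A$ and a routine compactness argument showing $x \mapsto A(x)$ is outer semicontinuous with locally bounded values, hence $\theta$ is upper semicontinuous, and lower semicontinuity is automatic from $\theta \ge 0$, $\theta(\bar x) = 0$ — then the classical \L ojasiewicz inequality (Theorem~\ref{ClassicalLojasiewiczInequality}) applied to the continuous semialgebraic function $\theta$ on a small compact ball $\mathbb{B}_\epsilon(\bar x)$, with zero set containing $\bar x$, gives constants $c' > 0$, $\alpha' > 0$ with $c'\,\mathrm{dist}(x, \theta^{-1}(0)) \le |\theta(x)|^{\alpha'}$, i.e. $c'\|x - \bar x\| \ge$ ... — actually the cleaner route is to apply \L ojasiewicz directly in the form: for the semialgebraic function $(x) \mapsto \theta(x)$, there are $c > 0$, $\alpha > 0$ with $\theta(x) \le c\|x - \bar x\|^{\alpha}$ on $\mathbb{B}_\epsilon(\bar x)$. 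This is exactly the classical \L ojasiewicz inequality for the semialgebraic function $\|x-\bar x\|$ relative to the semialgebraic function $\theta$ vanishing at $\bar x$, or can be quoted from the standard "\L ojasiewicz inequality between two semialgebraic functions" (see \cite{Bochnak1998}); the translation into the stated inclusion $A(x) \subset A(\bar x) + c\|x-\bar x\|^\alpha \mathbb{B}$ is immediate from the definition of $\theta$.

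The main obstacle is the continuity (outer semicontinuity plus local boundedness) of the map $x \mapsto A(x)$ at $\bar x$, needed to know $\theta$ is continuous so that \L ojasiewicz applies; this is where Lemma~\ref{Lemma33} (upper H\"older continuity of $x \mapsto \Omega(x)$) and Lemma~\ref{Lemma37} (boundedness of multipliers) do the real work — they control the $y^k$-components and the $(\mu^k, \kappa^k)$-components of points of $A(x)$ uniformly for $x$ near $\bar x$, and the defining equations are continuous, so any convergent sequence $a^k \in A(x^k)$ with $x^k \to \bar x$ has its limit in $A(\bar x)$. Granting that, everything else is a direct invocation of Tarski--Seidenberg and the classical \L ojasiewicz inequality. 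An alternative that avoids discussing continuity of $\theta$ altogether: observe directly that the semialgebraic set $S := \{(x, t) \in \mathbb{B}_\epsilon(\bar x) \times \mathbb{R} : t \ge 0,\ A(x) \subset A(\bar x) + t\mathbb{B}\}$ contains, for each $x$, a smallest admissible $t = \theta(x)$ by part (i), and then the curve selection / monotonicity-of-semialgebraic-functions machinery gives the H\"older bound; but I would prefer the $\theta$-route above as it is shortest.
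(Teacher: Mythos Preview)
Your argument for part~(i) is fine and matches the paper's: closedness is immediate, boundedness comes from Lemma~\ref{Lemma37} (applied to the $y^k$'s and their multipliers), and nonemptiness from the KKT conditions at any $y\in\Omega(x)$.

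For part~(ii) your route is genuinely different from the paper's, and as written it has a gap. You define the Hausdorff excess $\theta(x)=\sup_{a\in A(x)}\mathrm{dist}(a,A(\bar x))$ and want to invoke a \L ojasiewicz-type inequality to get $\theta(x)\le c\|x-\bar x\|^{\alpha}$. For this you need $\theta$ continuous on the compact ball $\mathbb{B}_\epsilon(\bar x)$ (the two-function \L ojasiewicz inequality in \cite{Bochnak1998} requires continuity of both comparands). Your outer-semicontinuity/local-boundedness argument correctly gives \emph{upper} semicontinuity of $\theta$ everywhere, but the sentence ``lower semicontinuity is automatic from $\theta\ge 0$, $\theta(\bar x)=0$'' only yields lower semicontinuity \emph{at the single point} $\bar x$. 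Away from $\bar x$, lower semicontinuity of $\theta$ would require inner semicontinuity of $x\mapsto A(x)$, which you have not established and which need not hold. Your final paragraph gestures at a repair via curve selection or the monotone one-variable function $t\mapsto\sup_{\|x-\bar x\|\le t}\theta(x)$; that route does work (this auxiliary function is semialgebraic, nondecreasing, and tends to $0$ as $t\to 0^+$ by upper semicontinuity at $\bar x$, hence is $O(t^{\alpha})$ by Puiseux), but you do not actually carry it out.

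The paper avoids all of this by mimicking the proof of Lemma~\ref{Lemma33} verbatim. It writes $A(x)=\{a:\Gamma(x,a)=0\}$ for a single locally Lipschitz semialgebraic residual $\Gamma$ (the sum of $[G_i]_+$ and $|H_j|$ over the finitely many constraints defining $A(x)$), applies the classical \L ojasiewicz inequality (Theorem~\ref{ClassicalLojasiewiczInequality}) to $a\mapsto\Gamma(\bar x,a)$ on a large ball $\mathbb{B}_R$ containing all $A(x)$ for $x$ near $\bar x$, and then uses the Lipschitz bound $|\Gamma(\bar x,a)|=|\Gamma(\bar x,a)-\Gamma(x,a)|\le L\|x-\bar x\|$ for $a\in A(x)$. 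This needs no set-valued continuity and no two-function \L ojasiewicz; the uniform bound $A(x)\subset\mathbb{B}_R$ for $x\in\mathbb{B}_\epsilon(\bar x)$ (from Lemma~\ref{Lemma37} and (MFCQ)) is the only preliminary. What your approach buys is conceptual: it isolates the H\"older upper semicontinuity of $x\rightrightarrows A(x)$ as a statement about the excess function directly; what the paper's approach buys is that it is shorter, reuses the template of Lemma~\ref{Lemma33}, and stays entirely within the single-function \L ojasiewicz inequality already stated in the paper.
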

\begin{proof}
(i) The set $A(x)$ is obviously closed and it is bounded because of Lemma~\ref{Lemma37}.  Furthermore, by Lemma~\ref{Lemma36}, it is not hard to see that $A(x)$ is nonempty.

(ii) Take any $\epsilon > 0.$ Since (MFCQ) holds on $\Omega,$ we can find a constant $R > 0$ such that $A(x) \subset \mathbb{B}_R$ for all $x \in \mathbb{B}_\epsilon(\bar{x}).$ 

On the other hand, by definition, for each $x \in \mathbb{R}^n$  we have $y \in \Omega(x)$ if and only if $y \in \Omega$ and $\psi(x) = \phi(x, y),$ or equivalently
$$g_i(y) \leq 0, \ i=1, \ldots, r, \ h_j(y) = 0, \ j=1, \ldots, s, \textrm{ and } \psi(x) = \phi(x, y).$$
Therefore, we can write
\begin{eqnarray*}
A(x)  & = & \{a :  G_i(x, a) \le 0, i = 1, \ldots, \tilde{r}, \ \textrm{ and } \ H_j(x, a) = 0, j = 1, \ldots, \tilde{s} \}
\end{eqnarray*}
for some locally Lipschitz and semialgebraic functions $G_i$ and $H_j.$ Let
\begin{eqnarray*}
\Gamma(x, a) &:=& \sum_{i = 1}^{\tilde{r}} [G_i(x, a)]_+ + \sum_{j = 1}^{\tilde{s}} |H_j(x, a)|.
\end{eqnarray*}
Then $\Gamma$ is a locally Lipschitz and semialgebraic function and $A(x) = \{a : \Gamma(x, a) = 0\}.$ Since $\mathbb{B}_R$ is a compact set, it follows from the classical \L ojasiewicz inequality (see Theorem~\ref{ClassicalLojasiewiczInequality}) that there are constants $c > 0$ and $\alpha > 0$ such that
\begin{eqnarray*}
c\, \mathrm{dist}(a , A(\bar{x})) & \le & |\Gamma(\bar{x}, a)|^{\alpha} \quad \textrm{ for all } \quad a \in \mathbb{B}_R.
 \end{eqnarray*}

On the other hand, since $\Gamma$ is locally Lipschitz, it is globally Lipschitz on the compact set $\mathbb{B}_\epsilon(\bar{x}) \times \mathbb{B}_R;$  in particular, there exists a constant $L > 0$ such that
 \begin{eqnarray*}
|\Gamma(x, a) - \Gamma(\bar{x}, a)|  & \le & L \|x - \bar{x}\| \quad \textrm{ for all } \quad (x, a) \in \mathbb{B}_\epsilon(\bar{x}) \times \mathbb{B}_R.
\end{eqnarray*}

Let $x \in \mathbb{B}_\epsilon(\bar{x})$ and take an arbitrary  $a \in A(x).$ Then $A(x) \subset \mathbb{B}_R$ and $\Gamma(x, a) = 0.$ Therefore,
 \begin{eqnarray*}
c\, \mathrm{dist}(a , A(\bar{x}))
& \le & |\Gamma(\bar{x}, a)|^{\alpha} \ = \ |\Gamma(x, a) - \Gamma(\bar{x}, a)|^{\alpha} \\
& \le & L^{\alpha} \|x - \bar{x}\|^{\alpha}.
 \end{eqnarray*}
This implies immediately the required statement.
\end{proof}

For simplicity of notation, we write $b := (\mu^0_1, \ldots, \mu^0_r, \kappa^0_1, \ldots, \kappa^0_s) \in \mathbb{R}^r \times\mathbb{R}^s.$ For each $x \in \Omega$ and $R > 0,$ let
\begin{eqnarray*}
B_R(x) := \{b & : &  \textrm{there exists } v \in \partial^\circ \psi (x) \textrm{ such that } \\
&& v + \sum_{i = 1}^{r} [\mu^0_i]^2 \nabla g_i(x) + \sum_{j = 1}^{s} \kappa^0_j \nabla h_j(x) \in \mathbb{B}_R, \\
&& \mu^0_i g_i(x) = 0, \ i = 1, \ldots, r\}.
\end{eqnarray*}

\begin{lemma} \label{Lemma39}
Let \emph{(MFCQ)} hold on $\Omega$ and let $\bar{x} \in \Omega.$ For each $\epsilon > 0$ there exist constants $R > 0, c > 0$ and $\alpha > 0$ such that for all 
$x \in \Omega \cap \mathbb{B}_\epsilon(\bar{x}),$ the set $B_R(x)$ is nonempty compact and satisfies
\begin{eqnarray*}
B_R(x) \subset B_R(\bar{x}) + c\|x - \bar{x}\| ^\alpha \mathbb{B}.
\end{eqnarray*}
\end{lemma}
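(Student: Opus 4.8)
Here is how I would approach the proof.

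\medskip

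\noindent The plan is to imitate the scheme of Lemmas~\ref{Lemma33}~and~\ref{Lemma38}: realize $B_R(x)$ as the image, under a coordinate projection (which is linear and $1$-Lipschitz), of the zero set of a single locally Lipschitz semialgebraic function that is uniformly bounded and Lipschitz in the parameter $x,$ and then call upon the classical \L ojasiewicz inequality (Theorem~\ref{ClassicalLojasiewiczInequality}).

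\emph{Choice of $R,$ nonemptiness and compactness.} Since $\psi$ is locally Lipschitz (Lemma~\ref{Lemma32}) and, by Lemma~\ref{Lemma31}(ii), the sets $\Omega(x)$ lie in a common ball for $x \in \mathbb{B}_\epsilon(\bar{x}),$ the formula $\partial^\circ\psi(x) = \mathrm{co}\{\nabla_x\phi(x,y) : y \in \Omega(x)\}$ of Lemma~\ref{Lemma32} shows that, for a suitable $R > 0,$ one has $\partial^\circ\psi(x) \subset \mathbb{B}_R$ for every $x \in \mathbb{B}_\epsilon(\bar{x}).$ Fix such an $R.$ Taking $\mu^0 = \kappa^0 = 0$ and any $v \in \partial^\circ\psi(x) \subset \mathbb{B}_R$ shows $0 \in B_R(x),$ so $B_R(x) \ne \emptyset.$ That $B_R(x)$ is closed follows at once from its definition and the compactness of $\partial^\circ\psi(x)$ (Lemma~\ref{Lemma32}); that it is bounded --- uniformly in $x \in \Omega \cap \mathbb{B}_\epsilon(\bar{x})$ --- I would prove by contradiction. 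If $x^k \in \Omega \cap \mathbb{B}_\epsilon(\bar{x})$ and $b^k = (\mu^{0,k},\kappa^{0,k}) \in B_R(x^k)$ with $\|b^k\| \to \infty,$ pick $v^k \in \partial^\circ\psi(x^k) \subset \mathbb{B}_R$ with $w^k := v^k + \sum_i [\mu^{0,k}_i]^2\nabla g_i(x^k) + \sum_j \kappa^{0,k}_j\nabla h_j(x^k) \in \mathbb{B}_R.$ Then $w^k - v^k$ is a bounded sequence in $N(\Omega,x^k)$ represented by the multipliers $([\mu^{0,k}_i]^2)_i \ge 0$ (which still satisfy the complementarity $[\mu^{0,k}_i]^2 g_i(x^k) = 0$) and $(\kappa^{0,k}_j)_j,$ so Lemma~\ref{Lemma37} forces these multipliers to be bounded, whence $\{b^k\}$ is bounded --- a contradiction. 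Thus $B_R(x)$ is nonempty and compact for all $x \in \Omega \cap \mathbb{B}_\epsilon(\bar{x}).$

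\emph{Upper H\"older continuity.} By Lemma~\ref{Lemma32} together with Carath\'eodory's theorem, $v \in \partial^\circ\psi(x)$ if and only if $v = \sum_{k=1}^{n+1}\lambda_k \nabla_x\phi(x,y^k)$ for some $y^1,\ldots,y^{n+1} \in \Omega(x)$ and some $\lambda = (\lambda_1,\ldots,\lambda_n) \in \mathbf{P},$ where $\lambda_{n+1} := 1 - \sum_{k=1}^n \lambda_k;$ and $y \in \Omega(x)$ if and only if $g_i(y) \le 0,$ $h_j(y) = 0,$ and $\psi(x) - \phi(x,y) \le 0$ (the last inequality being automatically an equality once $y \in \Omega$). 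Introduce the lifted variable $z := (y^1,\ldots,y^{n+1},\lambda,\mu^0,\kappa^0) \in \mathbb{R}^{n(n+1)} \times \mathbb{R}^{n} \times \mathbb{R}^{r} \times \mathbb{R}^{s}$ and put
\[
w(x,z) := \sum_{k=1}^{n+1}\lambda_k \nabla_x\phi(x,y^k) + \sum_{i=1}^r [\mu^0_i]^2 \nabla g_i(x) + \sum_{j=1}^s \kappa^0_j \nabla h_j(x).
\]
Then $B_R(x)$ is the image, under the projection $z \mapsto (\mu^0,\kappa^0),$ of
\[
\widetilde{B}_R(x) := \big\{ z \ : \ y^k \in \Omega(x)\ (k = 1,\ldots,n+1),\ \lambda \in \mathbf{P},\ \mu^0_i g_i(x) = 0\ (i = 1,\ldots,r),\ w(x,z) \in \mathbb{B}_R \big\},
\]
and, exactly as in the proofs of Lemmas~\ref{Lemma33}~and~\ref{Lemma38}, $\widetilde{B}_R(x) = \{ z : \Gamma(x,z) = 0 \}$ for the locally Lipschitz semialgebraic function $\Gamma$ obtained by summing the terms $[\psi(x) - \phi(x,y^k)]_+,$ $[g_i(y^k)]_+,$ $|h_j(y^k)|,$ $|\mu^0_i g_i(x)|,$ $[-\lambda_k]_+,$ $\big[\sum_{k=1}^n \lambda_k - 1\big]_+,$ and $\big[\|w(x,z)\|^2 - R^2\big]_+$ over all relevant indices (here $\psi$ enters, being locally Lipschitz by Lemma~\ref{Lemma32} and semialgebraic by Theorem~\ref{TarskiSeidenbergTheorem}). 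By Lemma~\ref{Lemma31}(ii) and the bound established above, the sets $\widetilde{B}_R(x)$ with $x \in \Omega \cap \mathbb{B}_\epsilon(\bar{x})$ all lie in a fixed ball $\mathbb{B}_{R'}.$ Applying Theorem~\ref{ClassicalLojasiewiczInequality} to $\Gamma(\bar{x},\cdot)$ on $\mathbb{B}_{R'}$ and combining it with the joint local Lipschitz continuity of $\Gamma$ on $\mathbb{B}_\epsilon(\bar{x}) \times \mathbb{B}_{R'}$ --- exactly as in the last paragraph of the proof of Lemma~\ref{Lemma38} --- produces $c > 0$ and $\alpha > 0$ with $\widetilde{B}_R(x) \subset \widetilde{B}_R(\bar{x}) + c\|x - \bar{x}\|^\alpha \mathbb{B}$ for all $x \in \Omega \cap \mathbb{B}_\epsilon(\bar{x}).$ Since $z \mapsto (\mu^0,\kappa^0)$ is $1$-Lipschitz, applying it to both sides yields $B_R(x) \subset B_R(\bar{x}) + c\|x - \bar{x}\|^\alpha \mathbb{B},$ as required.

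\emph{Expected main difficulty.} The routine final step (the classical \L ojasiewicz inequality plus Lipschitz continuity in the parameter) is already present twice above. The delicate point is the lifting: the quantifier ``there exists $v \in \partial^\circ\psi(x)$'' in the definition of $B_R(x)$ must be eliminated \emph{without} losing local Lipschitzness of the defining function, which is made possible precisely by the explicit description of $\partial^\circ\psi(x)$ from Lemma~\ref{Lemma32} (and Carath\'eodory's theorem); one must also verify that the lifted set is genuinely uniformly bounded, and here Lemma~\ref{Lemma31}(ii) controls the points $y^k$ while Lemma~\ref{Lemma37} controls the multipliers $(\mu^0,\kappa^0).$
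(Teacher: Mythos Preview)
Your proposal is correct and follows essentially the same approach as the paper: lift $B_R(x)$ to a set $\widetilde{B}_R(x)$ (the paper calls it $C(x)$) using Lemma~\ref{Lemma32} and Carath\'eodory's theorem, describe it as the zero set of a locally Lipschitz semialgebraic function, invoke the classical \L ojasiewicz inequality combined with Lipschitz dependence in $x$, and project. Your write-up is in fact slightly more explicit than the paper's in spelling out the residual function $\Gamma$ and in justifying the uniform boundedness of the lifted set via Lemmas~\ref{Lemma31}(ii) and~\ref{Lemma37}.
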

\begin{proof}
By Lemmas~\ref{Lemma31} and \ref{Lemma32}, there exists a constant $R > 0$ such that for all $x \in \mathbb{B}_{\epsilon}(\bar{x}),$ we have $\partial^0 \psi(x)$ is a nonempty compact subset of $\mathbb{B}_R;$ in particular, $0 \in B_R(x).$ Furthermore, in light of Lemma~\ref{Lemma37}, it is easy to see that the set $B_R(x)$ is compact.

On the other hand, it follows from Lemma~\ref{Lemma32} and the Carath\'eodory theorem that for each $x \in \mathbb{R}^n$ we have 
$v \in \partial^\circ \psi(x)$ if and only if there are $(\lambda_1, \ldots, \lambda_n)  \in \mathbf{P}$ and $y^1, \ldots, y^{n + 1} \in \Omega(x)$ such that
\begin{eqnarray*}
v &  = &  \sum_{k = 1}^{n} \lambda _k \nabla_x \phi(x, y^k) + (1 - \sum_{k = 1}^{n} \lambda _k) \nabla_x \phi(x, y^{n + 1}).
\end{eqnarray*}
Recall that $y \in \Omega(x)$ if and only if $y \in \Omega$ and $\psi(x) = \phi(x, y),$ or equivalently
$$g_i(y) \leq 0, \ i=1, \ldots, r, \ h_j(y) = 0, \ j=1, \ldots, s, \textrm{ and } \psi(x) = \phi(x, y).$$
Therefore, by definition, we can write
\begin{eqnarray*}
B_R(x)  & = & \{b \ : \ \exists z \in \mathbb{R}^m, G_i(x, z, b) \le 0, i = 1, \ldots, \tilde{r}, \ \textrm{ and } \ H_j(x, z, b) = 0, j = 1, \ldots, \tilde{s} \}
\end{eqnarray*}
for some locally Lipschitz and semialgebraic functions $G_i$ and $H_j.$ In particular, $B_R(x)$ is the image of the set
\begin{eqnarray*}
C(x)  & = & \{(z, b) \ : \ G_i(x, z, b) \le 0, i = 1, \ldots, \tilde{r}, \ \textrm{ and } \ H_j(x, z, b) = 0, j = 1, \ldots, \tilde{s} \}
\end{eqnarray*}
under the map $\mathbb{R}^m \times \mathbb{R}^{r + s} \to \mathbb{R}^{r + s}, (z, b) \mapsto b.$ As in the proof of Lemma~\ref{Lemma38}(ii), we can find constants $c > 0$ and $\alpha > 0$ such that 
\begin{eqnarray*}
C(x) \subset C(\bar{x}) + c\|x - \bar{x}\| ^\alpha \mathbb{B}  \quad \textrm{ for all } \quad x \in \mathbb{B}_\epsilon(\bar{x}).
\end{eqnarray*}
Take any $b \in B_R(x).$ Then there exists $z \in \mathbb{R}^m$ such that $(z, b) \in C(x).$ Therefore,
\begin{eqnarray*}
\mathrm{dist}(b, B_R(\bar{x})) &\le& \mathrm{dist}((z, b), C(\bar{x})) \ \le \ c\|x - \bar{x}\| ^\alpha,
\end{eqnarray*}
from which the desired conclusion follows.
\end{proof}

We can now finish the proof of Theorem~\ref{Theorem31}.

\begin{proof}[Proof of Theorem~\ref{Theorem31}]
Let $\bar{x} \in \Omega$ and fix a positive real number $\epsilon_1.$ By Lemmas~\ref{Lemma31}, \ref{Lemma32} and \ref{Lemma39}, there exists a constant $R > 0$ such that for all $x \in \mathbb{B}_{\epsilon_1}(\bar{x}),$ we have $\partial^0 \psi(x) \subset \mathbb{B}_R$ and $B_R(x)$ is nonempty compact. In particular, it holds that
\begin{eqnarray*}
\inf\{\|w\| \ : \ w \in \partial^0 \psi(x) + N(\Omega, x) \} &=& 
\inf\{\|w\| \ : \ w \in \left(\partial^0 \psi(x) + N(\Omega, x)\right) \cap \mathbb{B}_R \}.
\end{eqnarray*}
Therefore, in order to prove the inequality \eqref{PT2}, it suffices to consider vectors $w \in \partial^0 \psi(x) + N(\Omega, x)$ with $\|w\| \le R.$

Recall that we write
\begin{eqnarray*}
\lambda &:=& (\lambda_1, \ldots, \lambda_n) \in \mathbb{R}^n,\\
a &:=& (y^1, \ldots, y^{n + 1}, \mu^1, \ldots, \mu^{n + 1}, \kappa^1, \ldots, \kappa^{n + 1}, \lambda) \in \mathbb{R}^{n(n + 1)} \times \mathbb{R}^{r(n + 1)} \times \mathbb{R}^{s(n + 1)} \times \mathbb{R}^{n}, \\
b &:=& (\mu^0, \kappa^0) \in \mathbb{R}^{r} \times \mathbb{R}^{s}.
\end{eqnarray*}
Define the function $P$ by
\begin{eqnarray*}
P(x, a, b)
&:= & \sum_{k = 1}^{n} \lambda _k \left [ \phi(x, y^k) - \sum_{i = 1}^{r} [\mu^k_i]^2 g_i(y^k) - \sum_{j = 1}^{s} \kappa^k_j h_j(y^k) \right]  \\
&& + \ (1 - \sum_{k = 1}^{n} \lambda _k) \left [ \phi(x, y^{n + 1}) - \sum_{i = 1}^{r} [\mu^{n + 1}_i]^2 g_i(y^{n + 1}) - \sum_{j = 1}^{s} \kappa^{n + 1}_j h_j(y^{n + 1}) \right]  \\
&&  + \ \sum_{i = 1}^{r} [\mu^0_i]^2 g_i(x) + \sum_{j = 1}^{s} \kappa^0_j h_j(x).
\end{eqnarray*}
Then $P$ is a polynomial in $n(n + 3) + r(n + 2) + s(n + 2)$ variables of degree at most $d + 2.$ By Theorem~\ref{LojasiewiczGradientInequality}, for each $(\bar{a}, \bar{b}),$ there exist constants $c(\bar{a}, \bar{b}) > 0$ and $\epsilon(\bar{a}, \bar{b}) > 0$ such that for all $\|(x, a, b) - (\bar{x}, \bar{a}, \bar{b})\| \le \epsilon(\bar{a}, \bar{b}),$
\begin{eqnarray*}
\|\nabla P(x, a, b) \| & \ge & c(\bar{a}, \bar{b}) |P(x, a, b) - P(\bar{x}, \bar{a}, \bar{b})|^{1 - \alpha},
\end{eqnarray*}
where $\alpha := \frac{1}{\mathscr{R}(n(n + 3) + r(n + 2) + s(n + 2), d + 2)}.$

We have
\begin{eqnarray*}
A(\bar{x}) \times B_R(\bar{x}) & \subset & \bigcup \left \{(a, b) :  \|(a, b) - (\bar{a}, \bar{b})\| < \frac{\epsilon(\bar{a}, \bar{b})}{4} \right\},
\end{eqnarray*}
where the union is taken over all $(\bar{a}, \bar{b})$ in $A(\bar{x}) \times B_R(\bar{x}).$ By Lemmas~\ref{Lemma38}~and~\ref{Lemma39}, $A(\bar{x}) \times B_R(\bar{x})$ is nonempty compact, and so there exist $(\bar{a}^l, \bar{b}^l) \in A(\bar{x}) \times B_R(\bar{x})$ for $l = 1, \ldots, N,$ such that
\begin{eqnarray*}
A(\bar{x}) \times B_R(\bar{x}) & \subset & \bigcup_{l = 1}^N \left \{(a, b) :  \|(a, b) - (\bar{a}^l, \bar{b}^l)\| < \frac{\epsilon(\bar{a}^l, \bar{b}^l)}{4} \right\}.
\end{eqnarray*}

Let $\epsilon_2 := \min_{l = 1, \ldots, N} \epsilon(\bar{a}^l, \bar{b}^l) > 0$ and $c := \min_{l = 1, \ldots, N} c(\bar{a}^l, \bar{b}^l) > 0.$ By Lemmas~\ref{Lemma38} and \ref{Lemma39} again, there exists a positive constant $\epsilon \le \min\{\epsilon_1, \frac{\epsilon_2}{2}\}$ such that for all $x \in \Omega \cap \mathbb{B}_\epsilon(\bar{x}),$
\begin{eqnarray*}
\mathrm{dist}((a, b), A(\bar{x}) \times B_R(\bar{x})) &\le& \frac{\epsilon_2}{4} \quad \textrm{ for all } \quad (a, b) \in A(x) \times B_R(x).
\end{eqnarray*}

Take any $x \in \Omega \cap \mathbb{B}_\epsilon(\bar{x})$ and any $w \in \partial^\circ \psi(x) + N(\Omega, x)$ 
with $\|w\| \le R.$ It follows from Lemma~\ref{Lemma36} and the Carath\'eodory theorem that there exist $v \in \partial^\circ \psi(x)$ and $(a, b) \in A(x) \times B_R(x)$ satisfying the following conditions
\begin{eqnarray*}
w & = & v + \sum_{i = 1}^{r} [\mu^0_i]^2 \nabla g_i(x) + \sum_{j = 1}^{s} \kappa^0_j \nabla h_j(x), \\
v &  = &  \sum_{k = 1}^{n} \lambda _k \nabla_x \phi(x, y^k) + (1 - \sum_{k = 1}^{n} \lambda _k) \nabla_x \phi(x, y^{n + 1}) \\
0 & = & \nabla_y \phi(x, y^k) - \sum_{i = 1}^{r} [\mu^k_i]^2 \nabla g_i(y^k) -  \sum_{j = 1}^{s} \kappa^k_j \nabla h_j(y^k), \ k = 1, \ldots, n + 1, \\
0 & = & \mu^0_i g_i(x), \ i = 1, \ldots, r, \\
0 & = & \mu^k_i g_i(y^k), \ k = 1, \ldots, n + 1, \ i = 1, \ldots, r, \\
&& y^1, \ldots, y^{n + 1} \in \Omega(x), \quad \lambda \in \mathbf{P}.
\end{eqnarray*}
A direct computation shows that 
\begin{eqnarray*}
P(x, a, b) &=& \psi(x) \quad \textrm{ and } \quad \nabla P(x, a, b) \ = \ (w, 0, 0).
\end{eqnarray*}

On the other hand, since the set $A(\bar{x}) \times B_R(\bar{x})$ is nonempty compact, there is $(\bar{a}, \bar{b}) \in A(\bar{x}) \times B_R(\bar{x})$ such that
\begin{eqnarray*}
\|(a, b) - (\bar{a}, \bar{b})\| & = & \mathrm{dist}((a, b), A(\bar{x}) \times B_R(\bar{x})).
\end{eqnarray*}
There exists an index $l \in \{1, \ldots, N\}$ such that
\begin{eqnarray*}
\|(\bar{a}, \bar{b}) - (\bar{a}^l, \bar{b}^l)\| & < & \frac{\epsilon(\bar{a}^l, \bar{b}^l)}{4}.
\end{eqnarray*}
We have
\begin{eqnarray*}
\|({a}, {b}) - (\bar{a}^l, \bar{b}^l)\|
& \le & \|({a}, {b}) - (\bar{a}, \bar{b})\|  + \|(\bar{a}, \bar{b}) - (\bar{a}^l, \bar{b}^l)\| \\
& = & \mathrm{dist}((a, b), A(\bar{x}) \times B_R(\bar{x})) + \|(\bar{a}, \bar{b}) - (\bar{a}^l, \bar{b}^l)\| \\
& \le & \frac{\epsilon_2}{4} + \frac{\epsilon(\bar{a}^l, \bar{b}^l)}{4} \ \le \ \frac{\epsilon(\bar{a}^l, \bar{b}^l)}{2}.
\end{eqnarray*}
This implies that
\begin{eqnarray*}
\|(x, {a}, {b}) - (\bar{x}, \bar{a}^l, \bar{b}^l)\|
& \le & \|x - \bar{x}\| + \|(a, b) - \bar{a}^l, \bar{b}^l)\| \\
& \le & \epsilon + \frac{\epsilon(\bar{a}^l, \bar{b}^l)}{2} \ \le \
\frac{\epsilon_2}{2} +  \frac{\epsilon(\bar{a}^l, \bar{b}^l)}{2} \ \le \ \epsilon(\bar{a}^l, \bar{b}^l).
\end{eqnarray*}
Note that $P(\bar{x}, \bar{a}^l, \bar{b}^l)  =  \psi(\bar{x}).$ Therefore,
\begin{eqnarray*}
\|w\| \ = \ \|\nabla P(x, a, b) \|
& \ge & c(\bar{a}^l, \bar{b}^l) |P(x, a, b) - P(\bar{x}, \bar{a}^l, \bar{b}^l)|^{1 - \alpha} \\
&  = & c(\bar{a}^l, \bar{b}^l) |\psi(x) - \psi(\bar{x})|^{1 - \alpha} \\
& \ge & c |\psi(x) - \psi(\bar{x})|^{1 - \alpha},
\end{eqnarray*}
which completes the proof.
\end{proof}

\begin{remark}\label{NX31}{\rm
When the constraint set $\Omega$ is convex, a close look at the proof of Theorem~\ref{Theorem31} reveals that the exponent $\alpha$ in \eqref{PT2} can sharpen to 
$\alpha = \frac{1}{\mathscr{R}(2n + 2r + 2s, d + 2)}.$ This is due to the fact that in this case $\Omega(x)$ is a singleton for all $x \in \mathbb{R}^n,$ and so we can reduce variables in the set $A(x)$ and the polynomial $P.$ The details are left to the reader.
}\end{remark}

\section{Error bounds for the regularized gap function} \label{Section4}

In this section, we establish an error bound result for the regularized gap function $\psi$ associated with the variational inequality~\eqref{VI}.

Note by definition that $\psi$ is nonnegative on $\Omega.$ Assume that the set
\begin{eqnarray*}
\{x \in \Omega \ : \ \psi(x) = 0\}
\end{eqnarray*}
is nonempty.

\begin{theorem}\label{Theorem41}
Let {\rm (MFCQ)} hold on $\Omega.$ For any compact set $K \subset \Bbb{R}^n,$ there exists a constant $c > 0$ satisfying the following error bound
\begin{eqnarray*}\label{PT5}
c\, \mathrm{dist}(x, [x \in \Omega : \psi(x) = 0]) & \leq & [\psi (x)]^{\alpha} \quad \textrm{ for all }  \quad x \in \Omega \cap K,
\end{eqnarray*}
where $\alpha := \frac{1}{\mathscr{R}(n(n + 3) + r(n + 2) + s(n + 2), d + 2)}$ and the function $\mathscr{R}(\cdot, \cdot)$ is defined in \eqref{RFunction}.
\end{theorem}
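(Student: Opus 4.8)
\emph{Proof proposal.} Write $S := \{x \in \Omega : \psi(x) = 0\}$, which is nonempty by hypothesis, closed (since $\psi$ is continuous and $\Omega$ is closed) and semialgebraic; as $\psi \ge 0$ on $\Omega$, we have $S = \mathrm{argmin}_{\Omega}\psi$. The plan is to derive the error bound from the \L ojasiewicz gradient inequality of Theorem~\ref{Theorem31} by a desingularization/descent argument, and then to pass from this local estimate to the stated one on $\Omega \cap K$ by compactness. Since $\Omega$ is closed and $K$ is compact, $\Omega \cap K$ is compact, so it suffices to produce, for every $\bar{x} \in \Omega \cap K$, a radius $\epsilon_{\bar{x}} > 0$ and a constant $c_{\bar{x}} > 0$ with
\begin{eqnarray*}
c_{\bar{x}}\, \mathrm{dist}(x, S) & \le & [\psi(x)]^{\alpha} \quad \textrm{ for all } \quad x \in \Omega \cap \mathbb{B}_{\epsilon_{\bar{x}}}(\bar{x});
\end{eqnarray*}
extracting a finite subcover of $\Omega \cap K$ by such balls and putting $c := \min_l c_{\bar{x}^l}$ then proves the theorem.

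If $\bar{x} \notin S$, i.e. $\psi(\bar{x}) > 0$, the local estimate is immediate: by continuity there are $\epsilon_{\bar{x}} > 0$ and $\delta > 0$ with $[\psi(x)]^{\alpha} \ge \delta$ on $\Omega \cap \mathbb{B}_{\epsilon_{\bar{x}}}(\bar{x})$, while $\mathrm{dist}(\cdot, S)$ is $1$-Lipschitz, hence bounded above by some $M$ on that ball, so $c_{\bar{x}} := \delta/M$ works. The substantive case is $\bar{x} \in S$. Here Theorem~\ref{Theorem31}, together with $\psi(\bar{x}) = 0$ and $\psi \ge 0$ on $\Omega$, gives constants $c > 0$ and $\epsilon > 0$ such that
\begin{eqnarray*}
\inf\{\|w\| \ : \ w \in \partial^\circ \psi(x) + N(\Omega, x)\} & \ge & c\, [\psi(x)]^{1 - \alpha} \quad \textrm{ for all } \quad x \in \Omega \cap \mathbb{B}_{\epsilon}(\bar{x}).
\end{eqnarray*}
Using continuity of $\psi$ at $\bar x$ and $\psi(\bar x)=0$, choose $\epsilon' \le \epsilon/2$ so small that $\frac{1}{\alpha c}[\psi(x_0)]^{\alpha} < \epsilon/2$ for all $x_0 \in \Omega \cap \mathbb{B}_{\epsilon'}(\bar x)$.

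The main work is to turn the gradient inequality into the local error bound $\mathrm{dist}(x_0,S) \le \frac{1}{\alpha c}[\psi(x_0)]^{\alpha}$ on $\Omega \cap \mathbb{B}_{\epsilon'}(\bar x)$. Fix $x_0 \in \Omega \cap \mathbb{B}_{\epsilon'}(\bar x)$ with $\psi(x_0) > 0$ (the case $\psi(x_0)=0$ being trivial). I would run the subgradient descent curve of $\psi$ constrained to $\Omega$: an absolutely continuous $x \colon [0, +\infty) \to \Omega$ with $x(0) = x_0$ and $\dot x(t) = -g(t)$, where $g(t)$ is the minimal-norm element of $\partial^\circ\psi(x(t)) + N(\Omega, x(t))$; because $N(\Omega, \cdot)$ contains the normal cone to $\Omega$ (under (MFCQ)), such a trajectory stays in $\Omega$, so for a.e. $t$ one has $\dot x(t)$ in the tangent cone to $\Omega$ at $x(t)$, hence $\langle n(t), \dot x(t)\rangle \le 0$ for the normal-cone part $n(t)$ of $g(t)$. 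Since $\psi$ is locally Lipschitz and semialgebraic, the chain rule along $x(\cdot)$ is available, and as long as $x(t) \in \Omega \cap \mathbb{B}_{\epsilon}(\bar x)$ and $\psi(x(t)) > 0$,
\begin{eqnarray*}
\frac{d}{dt}[\psi(x(t))]^{\alpha} & = & \alpha\, [\psi(x(t))]^{\alpha - 1}\, \frac{d}{dt}\psi(x(t)) \\
& \le & -\alpha\, [\psi(x(t))]^{\alpha - 1}\, \|\dot x(t)\|^{2} \\
& \le & -\alpha c\, \|\dot x(t)\|,
\end{eqnarray*}
where the last step uses $\|\dot x(t)\| = \mathrm{dist}(0, \partial^\circ\psi(x(t)) + N(\Omega,x(t))) \ge c[\psi(x(t))]^{1-\alpha}$. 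Integrating, the length of the curve up to any time $T$ is at most $\frac{1}{\alpha c}\big([\psi(x_0)]^{\alpha} - [\psi(x(T))]^{\alpha}\big) \le \frac{1}{\alpha c}[\psi(x_0)]^{\alpha} < \epsilon/2$; hence the trajectory never leaves $\mathbb{B}_{\epsilon}(\bar x)$ (a bootstrap argument on the first exit time) and, having finite length, converges as $t \to \infty$ to some $x_\infty \in \Omega \cap \mathbb{B}_{\epsilon}(\bar x)$ with $0 \in \partial^\circ\psi(x_\infty) + N(\Omega, x_\infty)$. The gradient inequality at $x_\infty$ then forces $\psi(x_\infty) = 0$, so $x_\infty \in S$ and $\mathrm{dist}(x_0, S) \le \|x_0 - x_\infty\| \le \frac{1}{\alpha c}[\psi(x_0)]^{\alpha}$, i.e. $c_{\bar x} := \alpha c$ works on $\Omega \cap \mathbb{B}_{\epsilon'}(\bar x)$.

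I expect the passage from the gradient inequality to the error bound to be the only delicate point: justifying existence of the constrained minimal-selection descent trajectory (the set-valued map $x \mapsto \partial^\circ\psi(x) + N(\Omega,x)$ is outer semicontinuous only in the $\partial^\circ\psi$ part, since $N(\Omega,\cdot)$ may jump at $\partial\Omega$), its invariance in $\Omega$, the validity of the chain rule $\frac{d}{dt}\psi(x(t)) \le -\|\dot x(t)\|^2$ along it (path differentiability of the semialgebraic locally Lipschitz $\psi$, compatible with the normal-cone term), and the finite-length/convergence conclusion; in practice one builds the curve by a proximal/catching-up discretization and passes to the limit. To keep the exposition short I would, alternatively, invoke an abstract ``\L ojasiewicz gradient inequality $\Rightarrow$ error bound'' statement for lower semicontinuous semialgebraic functions bounded below (phrased via the strong slope and proved by Ekeland's variational principle), applied to $\psi + \delta_{\Omega}$ with $\delta_{\Omega}$ the indicator of $\Omega$: under (MFCQ) the limiting subdifferential sum rule gives $\partial(\psi + \delta_{\Omega})(x) \subset \partial^\circ\psi(x) + N(\Omega,x)$ on $\Omega$, so Theorem~\ref{Theorem31} bounds the strong slope of $\psi + \delta_{\Omega}$ below by $c[\psi(x)]^{1-\alpha}$ near $\bar x$, which yields the same local estimate without ODEs. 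Either route, followed by the compactness patching above, completes the proof.
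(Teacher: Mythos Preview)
Your overall architecture---reduce to a local estimate by compactness of $\Omega\cap K$, dispose of the case $\bar x\notin S$ by continuity, and for $\bar x\in S$ convert the \L ojasiewicz-type inequality of Theorem~\ref{Theorem31} into a local error bound---is exactly the paper's. The difference is in how the conversion is carried out.

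The paper takes precisely your ``alternative'' route, and does it explicitly rather than by citing an abstract slope $\Rightarrow$ error-bound principle: it sets $\theta:=[\psi]^\alpha$, assumes a point $\bar y\in\Omega\cap\mathbb{B}_{\epsilon/2}(\bar x)$ violates the bound $\frac{c\alpha}{4}\,\mathrm{dist}(\bar y,S)\le[\psi(\bar y)]^\alpha$, and applies Ekeland's variational principle to $\theta$ on $\Omega\cap\mathbb{B}_\epsilon(\bar x)$. This produces $\bar z$ in the interior of $\mathbb{B}_\epsilon(\bar x)$ with $\psi(\bar z)>0$ minimizing $\theta(\cdot)+\frac{c\alpha}{2}\|\cdot-\bar z\|$; the first-order condition then yields $0\in\partial^\circ\psi(\bar z)+\frac{c}{2}[\psi(\bar z)]^{1-\alpha}\mathbb{B}+N(\Omega,\bar z)$, contradicting Theorem~\ref{Theorem31}. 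This is a handful of lines and uses only the sum/chain rules already available.

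Your primary ODE/descent route is a genuinely different mechanism. The differential inequality you display is in fact correct once you observe that every $v\in\partial^\circ\psi(x(t))$ lies in $\partial^\circ\psi(x(t))+N(\Omega,x(t))$ (since $0\in N(\Omega,x(t))$), so the minimal-norm characterization of $g(t)$ gives $\langle v,g(t)\rangle\ge\|g(t)\|^2$ and hence $\frac{d}{dt}\psi(x(t))\le-\|\dot x(t)\|^2$. The real cost, as you note, is existence: the map $x\mapsto\partial^\circ\psi(x)+N(\Omega,x)$ is not outer semicontinuous because $N(\Omega,\cdot)$ jumps at the boundary, so building the minimal-selection trajectory (and proving its invariance in $\Omega$) requires a projected differential inclusion or catching-up argument that is substantially heavier than the Ekeland contradiction. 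Both approaches reach the same local constant of order $c\alpha$, but the paper's Ekeland argument is shorter and avoids the ODE machinery entirely; I would recommend you adopt it as the main line rather than the alternative.
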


\begin{proof}
Define for notational convenience $\mathscr{Z} := \{x \in \Omega \ : \ \psi(x) = 0\}.$ 
By using standard compactness arguments, it suffices to show, for each $\bar{x} \in \Omega,$ that there exist constants $c(\bar{x}) > 0$ and $\epsilon(\bar{x}) > 0$  such that
\begin{eqnarray*}
c(\bar{x})\, \mathrm{dist}(x, \mathscr{Z})  &\le& [\psi (x)]^{\alpha} \quad \textrm{ whenever } \quad
x \in \Omega \cap \mathbb{B}_{\epsilon(\bar{x})}(\bar x).
\end{eqnarray*}

Indeed, the statement is rather straightforward provided that $\bar{x} \not \in \mathscr{Z}$ (because of $\psi(\bar{x}) > 0$ and the function $\psi$ is continuous). Let us consider the case $\bar{x} \in \mathscr{Z},$ i.e., $\psi(\bar{x}) = 0.$ In view of Theorem~\ref{Theorem31}, there are constants $c > 0$ and $\epsilon > 0$ such that
\begin{eqnarray*} \label{Eqn32}
\inf\{\|w\| : w \in \partial^0 \psi(x) + N(\Omega, x) \} &\ge& c |\psi(x)|^{1 - \alpha} \quad \textrm{ for } \quad x \in \Omega \cap \mathbb{B}_{\epsilon}(\bar x).
\end{eqnarray*}
We will show that
\begin{eqnarray*}
\frac{c \alpha}{4}\, \mathrm{dist}(x, \mathscr{Z}) &\leq & [\psi (x)]^{\alpha} \quad \textrm{ for } \quad x \in \Omega \cap \mathbb{B}_{\frac{\epsilon}{2}}(\bar x).
\end{eqnarray*}

Arguing by contradiction, suppose that there exists $\bar{y} \in \Omega \cap \mathbb{B}_{\frac{\epsilon}{2}}(\bar x)$ such that
\begin{eqnarray*}
\frac{c \alpha}{4} \, \mathrm{dist}(\bar{y}, \mathscr{Z}) & > & [\psi (\bar{y})]^{\alpha}.
\end{eqnarray*}
Then $\bar{y} \not \in \mathscr{Z}$ and so $\psi (\bar{y}) > 0.$ Let us consider the continuous (semialgebraic) function
$$\theta \colon \Omega \rightarrow \mathbb{R}, \quad x \mapsto [\psi(x)]^{\alpha}.$$
Clearly, the function $\theta$ is locally Lipschitz on $\{x \in \Omega \ : \ \psi(x) > 0\}.$ Furthermore, we have
\begin{eqnarray*}
\inf_{x \in \Omega \cap \mathbb{B}_{{\epsilon}}(\bar x)} \theta(x) &=& 0 \ < \ \theta(\bar{y}) \ = \ [\psi (\bar{y})]^{\alpha} \ < \ \frac{c \alpha}{4} \, \mathrm{dist}(\bar{y}, \mathscr{Z}).
\end{eqnarray*}
Thanks to the Ekeland variational principle \cite{Ekeland1974}, there exists a point $\bar{z} \in \Omega \cap \mathbb{B}_{\epsilon}(\bar x)$ such that
\begin{eqnarray*}
\theta(\bar{z}) & \le & \theta(\bar{y}), \quad \|\bar{y} - \bar{z}\| \ < \ \frac{\mathrm{dist}(\bar{y}, \mathscr{Z})}{2},
\end{eqnarray*}
and $\bar{z}$ is a minimizer of the function
$$\Omega \cap \mathbb{B}_{{\epsilon}}(\bar x) \rightarrow \mathbb{R}, \quad x \mapsto \theta(x) + \frac{c \alpha}{2} \|x - \bar{z}\|.$$
By construction, then
\begin{eqnarray*}
\mathrm{dist}(\bar{z}, \mathscr{Z}) &\ge& \mathrm{dist}(\bar{y}, \mathscr{Z}) - \|\bar{y} - \bar{z}\| \ > \ \frac{\mathrm{dist}(\bar{y}, \mathscr{Z})}{2} \ > \ 0,
\end{eqnarray*}
which implies that $\bar{z} \not \in \mathscr{Z}$ and $\psi(\bar{z})> 0.$ Furthermore, we have
\begin{eqnarray*}
\|\bar{z} - \bar{x}\| & \le & \|\bar{y} - \bar{z}\| + \|\bar{y} - \bar{x}\| \\
& < & \frac{\mathrm{dist}(\bar{y}, \mathscr{Z})}{2} + \|\bar{y} - \bar{x}\| \\
& \le & \frac{\|\bar{y} - \bar{x}\|}{2} + \|\bar{y} - \bar{x}\| \\
& \le & \frac{\epsilon}{4} + \frac{\epsilon}{2}  < \epsilon,
\end{eqnarray*}
and so $\bar{z}$ is an interior point of the closed ball $\mathbb{B}_{\epsilon}(\bar{x}).$

We therefore deduce from Lagrange's multipliers theorem that
\begin{eqnarray*}
0 & \in & \partial \theta(\bar{z}) + \frac{c \alpha}{2}\mathbb{B} + N(\Omega, \bar{z}).
\end{eqnarray*}
Note that
\begin{eqnarray*}
\partial \theta(\bar{z}) &=& \alpha  [\psi(\bar{z})]^{\alpha - 1} \partial \psi(\bar{z}).
\end{eqnarray*}
Hence
\begin{eqnarray*}
0 &\in& \partial \psi(\bar{z}) + \frac{c}{2} [\psi(\bar{z})]^{1 - \alpha} \mathbb{B} + N(\Omega, \bar{z}) \\
&\subset & \partial^0 \psi(\bar{z}) + \frac{c}{2} [\psi(\bar{z})]^{1 - \alpha} \mathbb{B} + N(\Omega, \bar{z}).
\end{eqnarray*}
This implies that
\begin{eqnarray*}
\inf\{\|w\| \ : \ w \in \partial^0 \psi(\bar{z}) + N(\Omega, \bar{z}) \}  & \le & \frac{c}{2} \, [\psi(\bar{z})]^{1 - \alpha} \ < \
c \, [\psi(\bar{z})]^{1 - \alpha},
\end{eqnarray*}
which is a contradiction.
\end{proof}

The following example indicates that in general the error bound result in Theorem~\ref{Theorem41} cannot hold globally for all  $x \in \mathbb{R}^n.$
\begin{example}{\rm
Consider the variational inequality \eqref{VI} with data
\begin{eqnarray*}
F(x_1, x_2) &:=& (x_2 - 1, x_1 x_2 - 1) \quad \textrm{ and } \quad \Omega \ := \ \mathbb{R}^2.
\end{eqnarray*}
Take any $\rho > 0.$ Then it is easily seen that 
\begin{eqnarray*}
\psi(x) &=& \frac{1}{2\rho}\|F(x)\|^2 \ = \ \frac{1}{2\rho}\left[(x_2 - 1)^2 + (x_1 x_2 - 1)^2\right],
\end{eqnarray*}
and so $\psi^{-1}(0) = \{(1, 1)\}.$ Consider the sequence $x^k := (k, \frac{1}{k})$ for $k \ge 1.$ 
As $k \to +\infty,$ we have 
\begin{eqnarray*}
\psi(x^k) &=& \frac{1}{2\rho} \left(\frac{1}{k} - 1\right)^2 \ \to \frac{1}{2\rho}, \\
\mathrm{dist}(x^k, \psi^{-1}(0)) &=& \sqrt{(k - 1)^2 + \left(\frac{1}{k} - 1\right)^2} \ \to \ + \infty.
\end{eqnarray*}
It turns out that there cannot exist any positive scalars $c$ and $\alpha$ such that
\begin{eqnarray*}
c\, \mathrm{dist}(x^k, \psi^{-1}(0)) & \le & [\psi(x^k)]^{\alpha}  
\end{eqnarray*}
for all $k$ sufficiently large. Thus, a global error bound with the regularized gap function $\psi,$ even raised to any positive power, cannot hold in this case.
}\end{example}

We now assume that $\Omega$ is convex and let SOL be the solution set of the variational inequality~\eqref{VI}. In view of \cite[Theorem~10.2.3]{Facchinei2003}, 
$x \in \mathrm{SOL}$ if and only if $x \in \Omega \cap \psi^{-1}(0).$ Finally, recalling Remark~\ref{NX31}, the argument in the proof of Theorem~\ref{Theorem41} actually proves the following result. 

\begin{theorem} \label{Theorem42}
Let {\rm (MFCQ)} hold on $\Omega.$ If the constraint set $\Omega$ is convex, then for any compact set $K \subset \Bbb{R}^n$ there exists a constant $c > 0$ satisfying the following error bound
\begin{eqnarray*}
c\, \mathrm{dist}(x, \mathrm{SOL}) & \leq & [\psi (x)]^{\alpha} \quad \textrm{ for all }  \quad x \in \Omega \cap K,
\end{eqnarray*}
where $\alpha := \frac{1}{\mathscr{R}(2n + 2r + 2s, d + 2)}$ and the function $\mathscr{R}(\cdot, \cdot)$ is defined in \eqref{RFunction}.
\end{theorem}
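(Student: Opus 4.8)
The plan is to reduce Theorem~\ref{Theorem42} entirely to the two ingredients that have already been assembled: the sharpened \L ojasiewicz gradient inequality for $\psi$ in the convex case (Remark~\ref{NX31}, which gives exponent $\alpha = \frac{1}{\mathscr{R}(2n + 2r + 2s, d + 2)}$ instead of the exponent in Theorem~\ref{Theorem31}), and the identification $\mathrm{SOL} = \Omega \cap \psi^{-1}(0)$ coming from \cite[Theorem~10.2.3]{Facchinei2003}. Once these are in hand, the proof is \emph{verbatim} the proof of Theorem~\ref{Theorem41}: every step there used only that $\psi$ is continuous, semialgebraic, locally Lipschitz on $\{\psi > 0\}$, nonnegative on $\Omega$, and satisfies the subdifferential \L ojasiewicz inequality $\inf\{\|w\| : w \in \partial^0\psi(x) + N(\Omega, x)\} \ge c|\psi(x)|^{1-\alpha}$ near each point of the zero set — and none of these properties is affected by replacing the generic exponent with the sharper convex one.

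\textbf{Steps, in order.} First I would record that, since $\Omega$ is convex, Lemma~\ref{Lemma31} together with the strict concavity of $y \mapsto \phi(x, y)$ (the quadratic term $-\frac{\rho}{2}\|x-y\|^2$ makes $\phi(x, \cdot)$ strictly concave, while $\Omega$ is convex) forces $\Omega(x)$ to be a singleton for every $x \in \mathbb{R}^n$; this is exactly what Remark~\ref{NX31} invokes to cut the auxiliary vector $a$ down from $n+1$ copies of each multiplier block to a single copy, reducing $P$ to a polynomial in $2n + 2r + 2s$ variables of degree at most $d+2$, and hence yielding the improved $\alpha$ in the analogue of Theorem~\ref{Theorem31}. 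Second, I would state the resulting inequality: under (MFCQ) and convexity, for each $\bar x \in \Omega$ there are $c > 0$, $\epsilon > 0$ with $\inf\{\|w\| : w \in \partial^0\psi(x) + N(\Omega, x)\} \ge c|\psi(x) - \psi(\bar x)|^{1-\alpha}$ for $x \in \Omega \cap \mathbb{B}_\epsilon(\bar x)$, with $\alpha = \frac{1}{\mathscr{R}(2n + 2r + 2s, d+2)}$. Third, invoking \cite[Theorem~10.2.3]{Facchinei2003}, I would set $\mathscr{Z} := \Omega \cap \psi^{-1}(0) = \mathrm{SOL}$ and then run the Ekeland-variational-principle argument of Theorem~\ref{Theorem41} unchanged: a standard compactness reduction to local estimates near each $\bar x \in \Omega$, the trivial case $\bar x \notin \mathscr{Z}$ handled by continuity and positivity of $\psi$, and for $\bar x \in \mathscr{Z}$ a contradiction obtained by applying Ekeland to $\theta(x) = [\psi(x)]^\alpha$, locating a near-minimizer $\bar z$ in the interior of the ball with $\psi(\bar z) > 0$, applying Lagrange multipliers to get $0 \in \partial\theta(\bar z) + \frac{c\alpha}{2}\mathbb{B} + N(\Omega, \bar z)$, using the chain rule $\partial\theta(\bar z) = \alpha[\psi(\bar z)]^{\alpha-1}\partial\psi(\bar z)$, and contradicting the \L ojasiewicz-type lower bound.

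\textbf{Main obstacle.} There is essentially no new analytic difficulty; the only point requiring care is justifying that the proof of Theorem~\ref{Theorem31} really does sharpen in the convex case as claimed in Remark~\ref{NX31} — i.e., checking that when $\Omega(x)$ is a singleton the Carath\'eodory-type representation of $\partial^\circ\psi(x)$ in Lemma~\ref{Lemma32} collapses (no convex combination of distinct maximizers is needed, so the multipliers $\lambda \in \mathbf{P}$ disappear), that the sets $A(x)$, $B_R(x)$ and their upper-H\"older-continuity proofs (Lemmas~\ref{Lemma38} and \ref{Lemma39}) go through with the reduced variable count, and that the reconstructed polynomial $P(x, a, b) = \phi(x, y) - \sum_i [\mu_i]^2 g_i(y) - \sum_j \kappa_j h_j(y) + \sum_i [\mu^0_i]^2 g_i(x) + \sum_j \kappa^0_j h_j(x)$ still satisfies $P = \psi(x)$ and $\nabla P = (w, 0, 0)$ at the relevant point, so that Theorem~\ref{LojasiewiczGradientInequality} applies with $n(n+3) + r(n+2) + s(n+2)$ replaced by $2n + 2r + 2s$. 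All of this is routine bookkeeping of the kind the authors have already deferred "to the reader," so the write-up can legitimately be short, pointing to Remark~\ref{NX31} and the proof of Theorem~\ref{Theorem41} and indicating only the singleton-of-$\Omega(x)$ observation and the $\mathrm{SOL} = \Omega \cap \psi^{-1}(0)$ identification as the substantive inputs.
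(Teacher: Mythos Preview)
Your proposal is correct and follows exactly the paper's own approach: the paper gives no separate proof for Theorem~\ref{Theorem42}, merely noting that the identification $\mathrm{SOL} = \Omega \cap \psi^{-1}(0)$ (via \cite[Theorem~10.2.3]{Facchinei2003}) together with the sharpened exponent from Remark~\ref{NX31} allows the argument of Theorem~\ref{Theorem41} to be repeated verbatim. Your write-up is essentially an expansion of that one-sentence justification.
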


\begin{remark}{\rm
As usual, the generality may exclude simple cases: the exponents in Theorems~\ref{Theorem41} and \ref{Theorem42} is not ``sharp'' because in the case $F$ is strongly monotone and $\Omega$ is closed convex, $\alpha = \frac{1}{\mathscr{R}(2n + 2r + 2s, d + 2)},$ while it is well-known that (see \cite{Huang2005, Ng2007, Taji1993, Yamashita1997-2, Wu1993}) the exponent equals $\frac{1}{2}$ in such a case. Thus, although our exponent estimate works for the general case, it may not be tight in particular settings. This calls for further improvements of the exponents obtained in the general polynomial variational inequalities.
}\end{remark}

\subsection*{Acknowledgments}
The authors wish to thank Guoyin Li for his helpful comments on this paper.


\end{document}